\newif\ifshowflag
\newif\ifshowqstn
\newif\ifshowinfo
\newcommand{\flag}[1]{\ifshowflag
  {\noindent $\clubsuit\clubsuit\clubsuit$\; {\sffamily #1}}\; $\clubsuit\clubsuit\clubsuit$\fi}
\newcommand{\showinfo}[1]{\ifshowqstn{\noindent*** Some Details ***  \\ {#1} \\  \noindent*** That's All! ***}\fi}
\numberwithin{equation}{section}        
\newcommand{\Va}{V\"ais\"al\"a}     
\def\rf#1{\@rf{#1}#1:;;}
\def\rfs#1{\@rfs{#1}#1:;;}
\def\rfm#1{\@rfF#1<>;;}
\def\@C{C}\def\@CC{CC}\def\@E{E}\def\@F{F}\def\@L{L}\def\@P{P}\def\@PP{PP}\def\@Q{Q}
\def\@R{R}\def\@S{S}\def\@T{T}\def\@TT{TT}\def\@X{X}\def\@XX{XX}\def\@Ex{Ex}\def\@s{s}\def\@f{f}
\def\@rf#1#2:#3;;{\def\@b{#2}
  \ifx\@b\@C Corollary~\ref{#1}\else%
  \ifx\@b\@CC Corollary~\ref{#1}\else%
  \ifx\@b\@E (\ref{#1})\else
  \ifx\@b\@Ex Exercise~\ref{#1}\else%
  \ifx\@b\@F Fact~\ref{#1}\else%
  \ifx\@b\@L Lemma~\ref{#1}\else%
  \ifx\@b\@P Proposition~\ref{#1}\else%
  \ifx\@b\@PP Proposition~\ref{#1}\else%
  \ifx\@b\@Q Question~\ref{#1}\else%
  \ifx\@b\@R Remark~\ref{#1}\else%
  \ifx\@b\@S Section~\ref{#1}\else%
  \ifx\@b\@T Theorem~\ref{#1}\else%
  \ifx\@b\@TT Theorem~\ref{#1}\else%
  \ifx\@b\@X Example~\ref{#1}\else%
  \ifx\@b\@XX Example~\ref{#1}\else%
  \ifx\@b\@s \S\ref{#1}\else
  \ifx\@b\@f Figure~\ref{#1}\else%
  \ref{#1}\fi\fi\fi\fi\fi\fi\fi\fi\fi\fi\fi\fi\fi\fi\fi\fi\fi}
\def\@rfs#1#2:#3;;{\def\@b{#2}
  \ifx\@b\@C Corollaries~\ref{#1}\else%
  \ifx\@b\@CC Corollaries~\ref{#1}\else%
  \ifx\@b\@Ex Exercises~\ref{#1}\else%
  \ifx\@b\@F Facts~\ref{#1}\else%
  \ifx\@b\@L Lemmas~\ref{#1}\else%
  \ifx\@b\@P Propositions~\ref{#1}\else%
  \ifx\@b\@PP Propositions~\ref{#1}\else%
  \ifx\@b\@Q Questions~\ref{#1}\else%
  \ifx\@b\@R Remarks~\ref{#1}\else%
  \ifx\@b\@S Sections~\ref{#1}\else%
  \ifx\@b\@T Theorems~\ref{#1}\else%
  \ifx\@b\@TT Theorems~\ref{#1}\else%
  \ifx\@b\@X Examples~\ref{#1}\else%
  \ifx\@b\@XX Example~\ref{#1}\else%
  \ifx\@b\@s \S\ref{#1}\else
  \ifx\@b\@f Figures~\ref{#1}\else%
  \ref{#1}\fi\fi\fi\fi\fi\fi\fi\fi\fi\fi\fi\fi\fi\fi\fi\fi}
\def\@rfF<#1>#2;;{\def\@c{#2}
  \@rfs{#1}#1:;;\ifx\@c\empty\else\@rfL:#2;;\fi}
\def\@rfL:#1<#2>#3;;{\def\@b{#2}\def\@c{#3}
  #1\ifx\@b\empty\else\ref{#2}\ifx\@c\empty\else\@rfL:#3;;\fi\fi}
\definecolor{darkblue}{rgb}{0,0,0.6}
\definecolor{darkgreen}{rgb}{0,0.4,0}
\definecolor{darkred}{rgb}{0.6,0,0}
\definecolor{lightblue}{rgb}{0.8,0.8,1}
\definecolor{lightgreen}{rgb}{0.25,1,0.25}
\definecolor{lightred}{rgb}{1,0.5,0.5}
\definecolor{lightpurple}{rgb}{1,0.4,0.6}
\definecolor{darkpurple}{rgb}{0.5,0,0.5}
\newcommand{\ds}{\displaystyle} \newcommand{\half}{\frac{1}{2}}       
\DeclareMathOperator{\id}{\mathsf{id}}	      
\DeclareMathOperator{\ed}{\lvert\cdot\rvert}  
\newcommand{\sm}{\setminus}		              
\providecommand{\abs}[1]{\lvert#1\rvert}	  
\def\vint_#1{\mathchoice
          {\mathop{\vrule width 6pt height 3 pt depth -2.5pt
                  \kern -8pt \intop}\nolimits_{\kern -4pt#1}}%
          {\mathop{\vrule width 5pt height 3 pt depth -2.6pt
                  \kern -6pt \intop}\nolimits_{#1}}%
          {\mathop{\vrule width 5pt height 3 pt depth -2.6pt
                  \kern -6pt \intop}\nolimits_{#1}}%
          {\mathop{\vrule width 5pt height 3 pt depth -2.6pt
                   \kern -6pt \intop}\nolimits_{#1}}}
\newcommand{\ifff}{if and only if }  \newcommand{\wrt}{with respect to }  
\newcommand{\tfae}{the following are equivalent}
\newcommand{\bt}{bounded turning}   
\newcommand{\bl}{bi-Lipschitz}       \newcommand{\qc}{quasiconformal}
\newcommand{\qsy}{quasisymmetry}    \newcommand{\qsc}{quasisymmetric}
\newcommand{\homeo}{homeomorphism}  \newcommand{\homic}{homeomorphic}
\newcommand{\alf}{\alpha}       \newcommand{\del}{\delta}   \newcommand{\Del}{\Delta}
\newcommand{\veps}{\varepsilon} \newcommand{\vphi}{\varphi} \newcommand{\gam}{\gamma}
\newcommand{\Gam}{\Gamma}          \newcommand{\lam}{\lambda}
\newcommand{\sig}{\sigma}          \newcommand{\tha}{\theta}
\newcommand{\mcA}{{\mathcal A}} \newcommand{\mcB}{{\mathcal B}}  
 \newcommand{\mcF}{{\mathcal F}}   \newcommand{\mcI}{{\mathcal I}} \newcommand{\mcJ}{{\mathcal J}}  
 \newcommand{\mcR}{{\mathcal R}} \newcommand{\mcS}{{\mathcal S}}
\newcommand{\lp}{\left(}    
\newcommand{\rp}{\right)}   
\newcommand{\dimA}{\dim_{\mathcal A}}   
\DeclareMathOperator{\card}{card}
\DeclareMathOperator{\diam}{diam}
\DeclareMathOperator{\dist}{dist}
\newcommand{\A}{\mathsf{A}}     
\newcommand{\mathfont}{\mathsf} 
\newcommand{\mfB}{{\mathfont B}}      
\newcommand{\mfN}{{\mathfont N}}      
\newcommand{\mfR}{{\mathfont R}}      
\newcommand{\mfS}{{\mathfont S}}      
\newcommand{\bd}{\partial}      
\theoremstyle{definition}
\theoremstyle{remark}
\theoremstyle{plain}
\newtheorem*{thm*}{Theorem}         
\newtheorem*{lma*}{Lemma}           
\newtheorem*{cor*}{Corollary}
\newtheorem*{conj*}{Conjecture}
\newtheorem*{prop*}{Proposition}
\theoremstyle{remark}
\newtheorem*{claim*}{Claim}
\newtheorem*{xx*}{Example}
\newtheorem*{xxs*}{Examples}
\newtheorem*{fact*}{Fact}
\newtheorem*{qstn*}{Question}
\newtheorem*{rmk*}{Remark}
\newtheorem*{rmks*}{Remarks}
\theoremstyle{plain}
\newtheorem{thm}[equation]{Theorem}
\newtheorem{lma}[equation]{Lemma}
\newtheorem{cor}[equation]{Corollary}
\newtheorem{prop}[equation]{Proposition}
\theoremstyle{remark}
\newtheorem{qstn}[equation]{Question}
\newtheorem{rmk}[equation]{Remark}
\smallskip\noindent{\em #1}}{\par\smallskip}
\newenvironment{noname}[1]
  {\par\smallskip\noindent%
   \leftskip=\nnlen\rightskip=\nnlen\addtolength{\leftmargini}{\nnlen}%
   \em #1}%
  {\par\smallskip\addtolength{\leftmargini}{-\nnlen}}
\newlength{\nnlen}\setlength{\nnlen}{30pt}
\newenvironment{pf}[1]
  {\par\smallskip\noindent\refstepcounter{equation}\theequation.{ \em #1.}}%
  {\qed\smallskip}
\newenvironment{pf*}[1]{\subsubsection*{#1}}{\qed\smallskip}	
\newcounter{aenumctr} 
  {\begin{list}%
    {\rm(\alph{aenumctr})}
    {\usecounter{aenumctr}}}  
  {\end{list}}
\renewcommand{\bl}{bi-Lipschitz}          
\DeclareMathOperator{\dia}{\mathsf{dd}}   
\DeclareMathOperator{\D}{\delta}          
\newcommand{\mfA}{{\mathsf A}}            %
\begin{document} 
\title[Quasi and BT Circles Modulo bi-Lipschitz Maps]{Quasicircles and Bounded Turning Circles Modulo bi-Lipschitz Maps}
\date{\today}


\author{David A Herron} 
\address{Department of Mathematics, University of Cincinnati, OH  45221}
\email{David.Herron@math.UC.edu}

\author{Daniel Meyer}
\address{Department of Mathematics and Statistics, University of Helsinki, P.O. Box 68 (Gustaf H\"allstr\"omin katu 2b) FI-00014,  Helsinki, Finland}
\email{DMeyermail@gmail.com}

\thanks{The first author was partially supported by the Charles Phelps Taft Research Center.  The second author was supported by the Academy of Finland, projects SA-134757 and SA-118634.}

\keywords{Quasicircle, Jordan curve, bounded turning, doubling}
\subjclass[2010]{Primary: 30L10; Secondary:  30C62, 51F99}

\begin{abstract}
We construct a catalog, of snowflake type metric circles, that describes all metric quasicircles up to \bl\ equivalence.  This is a metric space analog of a result due to Rohde.  Our construction also works for all bounded turning metric circles; these need not be doubling.  As a byproduct, we show that a metric quasicircle with Assouad dimension strictly less than two is bi-Lipschitz equivalent to a planar quasicircle.
\end{abstract}


\newcommand{\PV}{{{\small \em  preliminary version---please do not circulate}}}
\maketitle
\section{Introduction}  \label{S:Intro} 

By definition, a \emph{metric quasicircle} is the \qsc\ image of the unit circle $\mfS^1$.  (See \rf{S:Prelims} for definitions and basic terminology.)  We exhibit a catalog that contains a \bl\ copy of each metric quasicircle.  This is a metric space analog of recent work by Steffen Rohde \cite{Rohde-qcircles-mod-bl}, so we briefly describe his result.  He constructed a collection $\mcR$ of snowflake type planar curves with the intriguing property that each planar quasicircle (the image of $\mfS^1$ under a global \qc\ self-\homeo\ of the plane) is \bl\ equivalent to some curve in $\mcR$.

Rohde's catalog is $\mcR:=\bigcup \mcR_p$, where $p\in[1/4,1/2)$ is a \emph{snowflake parameter}.  Each curve in $\mcR_p$ is built in a manner reminiscent of the construction of the von Koch snowflake.  Thus, each $R\in\mcR_p$ is the limit of a sequence $(R^n)$ of polygons where $R^{n+1}$ is obtained from $R^n$ by using the replacement rule illustrated in \rf{f:Rohde_snow}:  for each of the $4^n$ edges $E$ of $R^n$ we have two choices, either we replace $E$ with the four line segments obtained by dividing $E$ into four arcs of equal diameter, or we replace $E$  by a similarity copy of the polygonal arc $A_p$ pictured at the top right of \rf{f:Rohde_snow}.  In both cases $E$ is replaced by four new segments, each of these with diameter $(1/4)\diam(E)$ in the first case or with diameter $p\diam(E)$ in the second case.  The second type of replacement is done so that the ``tip'' of the replacement arc points into the exterior of $R^n$.  This iterative process starts with $R^1$ being the unit square, and the snowflake parameter, thus the polygonal arc $A_p$, is fixed throughout the construction.  See the discussion at the beginning of \rf{s:C} for more details.

The sequence $(R^n)$ of polygons converges, in the Hausdorff metric, to a planar quasicircle $R$ that we call a \emph{Rohde snowflake} constructed with snowflake parameter $p$.  Then $\mcR_p$ is the collection of all Rohde snowflakes that can be constructed with snowflake parameter $p$.

Rohde \cite[Theorem~1.1]{Rohde-qcircles-mod-bl} proved the following.
\begin{noname}
  A planar Jordan curve is a quasicircle \ifff it is \\ the image of some Rohde snowflake under a \bl \\ self-homeomorphism of the plane.
\end{noname}

\begin{figure}[t] 
  \centering
  \begin{overpic}[width=12cm, tics=10]{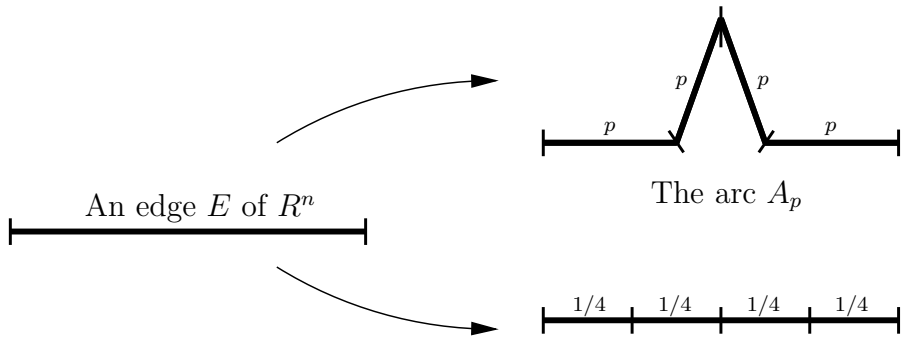}
  \put(9,13.5){An edge $E$ of $R^n$}
  \put(71.7,15){The arc $A_p$}
    \put(66.5,23){$\scriptstyle{p}$}
    \put(91,23){$\scriptstyle{p}$}
    \put(74.5,28){$\scriptstyle{p}$}
    \put(83.5,28){$\scriptstyle{p}$}
    \put(63,3){$\scriptstyle{1/4}$}
    \put(72.5,3){$\scriptstyle{1/4}$}
    \put(82.3,3){$\scriptstyle{1/4}$}
    \put(92.1,3){$\scriptstyle{1/4}$}
  \end{overpic}
  \caption{Construction of a Rohde-snowflake.}
  \label{f:Rohde_snow}
\end{figure}

Thanks to a celebrated theorem of Ahlfors \cite{Ahlfors-QCrflxns}, there is a \emph{simple geometric criterion} that characterizes planar quasicircles: a planar Jordan curve $\Gam$ is a quasicircle if and only if it satisfies the \emph{bounded turning condition}, which means that there is a constant $C\geq 1$ such that for each pair of points $x,y$ on $\Gam$, the smaller diameter subarc $\Gam[x,y]$ of $\Gamma$ that joins $x,y$ satisfies
\begin{equation}  \label{eq:bt}
  \diam(\Gam[x,y])\le C\,|x-y| \,.  \tag{BT}
\end{equation}
We say $\Gamma$ is \emph{$C$-bounded turning} to emphasize the constant $C$.

Tukia and \Va\ \cite{TV-qs} introduced the notion of a \emph{\qsy} between metric spaces.  In this same paper they established the following metric space analog of Ahlfors' result.
\begin{noname}
A metric Jordan curve is a metric quasicircle \ifff it is both \bt\ and doubling (that is, of finite Assouad dimension).
\end{noname}

\medskip

Our catalog $\mcS$ of metric snowflake curves is a collection of metric circles $(\mfS^1,d)$ where the metrics $d$ are given in a simple way by specifying the diameter of each dyadic subarc of $\mfS^1$.  See \eqref{E:dist} and the end of \rf{s:DDF} for precise details.

Our catalog is $\mcS:=\bigcup \mcS_\sig$, and we also employ an auxiliary \emph{snowflake parameter} $\sig\in[1/2,1]$.  Each $(\mfS^1,d_\sig)$ in $\mcS_\sig$ has a metric $d_\sig$ that is obtained by the assignment of diameters to each dyadic subarc of $\mfS^1$.  As in Rohde's construction, at each step there are two choices: the diameter (\wrt $d_\sig$) of a given dyadic subarc is either one-half, or $\sig$, times the diameter of its parent subarc.

Each $(\mfS^1,d_\sig)$ is a \bt\ circle.  Moreover, when $\sig<1$, $(\mfS^1,d_\sig)$ has Assouad dimension $\alpha \leq \log 2/ \log(1/\sigma)<\infty$ (so, $2^{-1/\alpha}\le \sig<1$), hence $(\mfS^1,d_\sig)$ is doubling and thus a metric quasicircle; see \rf{L:dist}(e).  In fact, each collection $\mcS_\sig$ (with $\sig<1$) contains a \bl\ copy of every metric quasicircle with Assouad dimension strictly less than $\log(2)/\log(1/\sig)$.  In addition, the sub-catalog $\mcS_1$ contains a \bl\ copy of every \bt\ circle.

Here is our main result.

\begin{thm*} 
  Let $\Gam$ be a metric Jordan curve.
  \begin{itemize}
    \item[(A)]  If $\,\Gam$ is \bt, then $\Gam$ is \bl\ equivalent to some curve in $\mcS_1$.
    \item[(B)]  If $\,\Gam$ is a metric quasicircle with Assouad dimension $\alf:=\dimA(\Gam)$ and $\sig\in(2^{-1/\alf},1)$, then $\Gam$ is \bl\ equivalent to a curve in $\mcS_\sig$.
    \item[(C)]  A metric quasicircle is \bl\ equivalent to a \emph{planar quasicircle} if and only if it has Assouad dimension strictly less than two.   
  \end{itemize}
\end{thm*}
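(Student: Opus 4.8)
The plan is to prove the two implications separately: the forward one is soft, using only that Assouad dimension is a \bl\ invariant together with the classical porosity of planar quasicircles, while the reverse one is the substantive part and I would deduce it from part (B) by realizing the snowflake circles $\mcS_\sig$ in the plane. For the \emph{``only if''} direction, suppose $\Gam$ is \bl\ equivalent to a planar quasicircle $\Gam'$; since $\dimA$ is preserved under \bl\ maps it suffices to know $\dimA(\Gam')<2$ for every planar quasicircle $\Gam'$. For this I would use that $\Gam'$ bounds two complementary Jordan domains, each of which is a quasidisk and hence a John domain: so for every $z\in\Gam'$ and every $0<r\le\diam\Gam'$ there is a point $w$ with $\abs{w-z}<r$ whose ball of radius $cr$ lies in a complementary domain, where $c>0$ depends only on the quasicircle constant of $\Gam'$. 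Thus $\Gam'$ is uniformly porous in $\R^2$, and a uniformly porous subset of $\R^2$ has Assouad dimension at most $2-\eta$ for some $\eta=\eta(c)>0$; hence $\dimA(\Gam)=\dimA(\Gam')\le 2-\eta<2$.

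For the \emph{``if''} direction, let $\alf:=\dimA(\Gam)<2$. A nonconstant curve has $\dimA\ge 1$, so $1\le\alf<2$, whence $1/2\le 2^{-1/\alf}<2^{-1/2}$; fix $\sig\in(2^{-1/\alf},\,2^{-1/2})$. By part (B), $\Gam$ is \bl\ equivalent to some $(\mfS^1,d_\sig)\in\mcS_\sig$, and since $\sig<2^{-1/2}$ we have $\log 2/\log(1/\sig)<2$, so by \rf{L:dist}(e) this circle is doubling with $\dimA\le\log 2/\log(1/\sig)<2$. It therefore remains only to show that \emph{every} curve in $\mcS_\sig$, for $\sig\in[1/2,2^{-1/2})$, is \bl\ equivalent to a planar quasicircle.

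To realize $\mcS_\sig$ in the plane, recall (see \eqref{E:dist}, the end of \rf{s:DDF}, and \rf{L:dist}) that $d_\sig$ is determined, up to a fixed comparison constant, by the diameters assigned to the dyadic subarcs of $\mfS^1$, each being $\tfrac12$ or $\sig$ times its parent's. I would mirror this by the von~Koch type iteration reviewed at the start of \rf{s:C}: begin with a square $P^1$, and pass from $P^n$ to $P^{n+1}$ by replacing each edge $E$, representing a dyadic subarc $I$, with the two-segment ``tent'' over $E$ whose legs have lengths $c_1\abs{E}$ and $c_2\abs{E}$, where $c_1,c_2\in\{\tfrac12,\sig\}$ are the ratios $d_\sig$ assigns to the two children of $I$, the apex lying on the ``outer'' side of $P^n$ (cf.\ \rf{f:Rohde_snow}). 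The apex exists since $c_1+c_2\ge 1$, and since $c_1,c_2\le\sig<2^{-1/2}$ every tent is \emph{flat}: its legs meet $E$ at an angle bounded away from $\pi$ in terms of $\sig$ alone. I would then verify: (i) each $P^n$ is a Jordan polygon and the $P^n$ converge in the Hausdorff metric to a Jordan curve $\Gam'$; (ii) $\Gam'$ satisfies \eqref{eq:bt} with a constant depending only on $\sig$; and (iii) since the endpoints of $E$ stay on $\Gam'$ forever while the subsequent tents over $E$ have geometrically decaying heights, the subarc of $\Gam'$ carried by $I$ has diameter comparable to $\abs{E}$, which equals the product of ratios defining $d_\sig(I)$, so by \rf{L:dist} the identification $(\mfS^1,d_\sig)\to\Gam'$ is \bl. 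By (i)--(ii) and Ahlfors' theorem \cite{Ahlfors-QCrflxns}, $\Gam'$ is a planar quasicircle, so $\Gam$ is \bl\ equivalent to one, as required. (Consistency check: if the four grandchildren of every edge are always assigned ratios in the uniform pattern $1/4$ or $\sig^2$, then $\Gam'$ is literally a Rohde snowflake in $\mcR_{\sig^2}$, $\sig^2\in[1/4,1/2)$, hence a planar quasicircle by \cite[Theorem~1.1]{Rohde-qcircles-mod-bl}; the new content is that the \emph{mixed} choices permitted in $\mcS_\sig$ still yield an embedded, uniformly bounded turning planar curve.)

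The main obstacle is entirely in items (i) and (ii): one must show that infinitely many generations of outward ``tents'' of variable but uniformly flat shape, placed at the various relative scales forced by the mixed ratios $\{\tfrac12,\sig\}$, never cause the polygon to cross itself and obey a scale invariant bounded turning estimate. The delicate point — absent from Rohde's construction, where all tents of a given generation share a single scale — is to control a tent against the tents built on its two neighbouring edges when those edges sit at different scales; this is precisely where one needs $\sig<2^{-1/2}$, that is, $\dimA(\Gam)<2$ with room to spare.
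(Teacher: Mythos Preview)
Your ``only if'' sketch matches the paper's in substance: both rest on the known fact that planar quasicircles have $\dimA<2$ (the paper simply cites \cite[Lemma~4.1]{Rohde-qcircles-mod-bl} and \cite[Theorem~5.2]{Luuk-ass-dim}; your John-domain/porosity outline is one standard route to this).

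For the ``if'' direction the paper takes a different and sharper route. Rather than apply (B) as stated, it first records a $4$-adic variant (B$'$), Corollary~\ref{cor:thmB2}, obtained by taking the auxiliary integer $m$ in the proof of (B) to be even. This produces a $4$-adic diameter function with snowflake parameter $p\in(4^{-1/\alf},1/2)$, and the point is that the two choices $1/4$ and $p$ now match \emph{exactly} Rohde's two replacement rules. Hence the model circle is realized in the plane as a genuine Rohde snowflake $R\in\mcR_p$, and the paper verifies the \bl\ equivalence directly via the triangle/sector estimates \eqref{E:dist(T0,T2)}--\eqref{E:last eqn}. As a bonus this recovers Rohde's theorem.

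Your dyadic two-tent realization is a plausible alternative, but the geometry you defer to items (i)--(ii) is harder than your remarks suggest. First, your consistency check is wrong: two successive $\sig$-tents over an edge produce a four-segment zigzag with \emph{two} peaks separated by a valley at the first apex, not the single-peak arc $A_{\sig^2}$ (whose outer two segments lie flat on the base); so even in the ``uniform grandchild'' case your curve does not land in $\mcR_{\sig^2}$. Second, and more seriously, the containment $T(E_i)\subset T(E)$ that drives the paper's (and Rohde's) embeddedness and bounded-turning argument \emph{fails} for the two-tent: the next-generation tent over a leg points away from the first apex, and a short computation shows its tip lies outside every isosceles triangle based on $E$. (The self-similar extreme $\sig=2^{-1/2}$ is the L\'evy $C$ curve, which is not a Jordan arc.) One can still prove simplicity and bounded turning for all $\sig<2^{-1/2}$, but this needs a different enclosing-region argument that you have not supplied; the paper's $4$-adic detour is precisely the device that sidesteps this difficulty.
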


\noindent
This result is quantitative in that the \bl\ constants depend only on the given data.  For example, if $\Gam$ is $C$-\bt, then the \bl\ constant in \rm(A) is
$$
  L=8\,C \max\{\diam(\Gam),\diam(\Gam)^{-1}\}.
$$
Minor modifications to our proofs reveal that the analogous results hold for \bt\ Jordan arcs and metric quasiarcs.

In addition, we explain how to recover Rohde's theorem from our result.  This provides an alternative proof of Rohde's result that avoids the technical construction of a ``uniform doubling measure'' appearing in \cite[Theorem~1.2]{Rohde-qcircles-mod-bl}.  In view of this, our argument somewhat simplifies the proof of Rohde's theorem.

We mention that Bonk, Heinonen, and Rohde have established a result that gives metric quasicircles as metric boundaries of certain metric disks; see \cite[Lemma~3.7]{BHR-cfml}.

\smallskip

The novel ideas in our approach include the following.  We make extensive use of the fact that every \bt\ metric space is \bl\ equivalent to its associated diameter distance space; see \rf{L:BT iff BL}.  In particular, this permits us to restrict attention to 1-\bt\ Jordan curves.  In this setting, the metrics are characterized, up to \bl\ equivalence, by knowledge of the diameters of certain subarcs, provided we have a sufficiently plentiful collection of subarcs; see \rf{L:dist}.  Finally, there is a straightforward way to build a \bl\ \homeo\ from one of our model curves onto such a metric Jordan curve; see \rf{P:subdiv_homeo} and \rf{L:d BL_to_d_D_variant2k}.


This document is organized as follows.  \rf{S:Prelims} contains preliminary information including background material on Assouad dimension (in \rf{s:assouad-dimension}) and on \qsc\ \homeo s (in \rf{s:QS}).   We prove a result about dividing an arc into subarcs of equal diameter (in \rf{s:dividing-arcs}) and (in \rf{s:shrinking}) give a useful tool for constructing \homeo s between Jordan curves.  We construct our dyadic models in \rf{s:DDF} and prove our Theorem in \rf{S:Proof}.
\tableofcontents        
\section{Preliminaries}  \label{S:Prelims} 

Here we set forth our (relatively standard) notation and terminology and present fundamental definitions and basic information.  First we provide some background on quasisymmetric maps, doubling, and bounded turning.  In \rf{s:IDD} we show that we can restrict attention to $1$-bounded turning circles.  In \rf{s:dividing-arcs} we prove that one can divide an arc into subarcs of equal diameter.  In \rf{s:shrinking} we establish a useful proposition for constructing \homeo s between Jordan arcs or curves.


\subsection{Basic Information}  \label{s:basic info} 
For the record, $\mfN$ denotes the set of natural numbers, i.e., the positive integers.

We view the unit circle $\mfS^1$ as the unit interval with its endpoints identified; that is, $\mfS^1=[0,1]/\{0\!\!\sim\!\!1\}=[0,1]/\!\!\sim$ where $s\sim t$ \ifff either $s=t$ or $\{s,t\}=\{0,1\}$.  Then $\lam$ denotes the (normalized) arc-length metric on $\mfS^1$: for $s,t\in\mfS^1$ with say $0\le s\le t\le1$,
$$
  \lam(s,t) := \min\{t-s, 1-(t-s)\} \,.  
$$

A (closed) \emph{Jordan curve} is the \homic\ image of the circle $\mfS^1$ and a \emph{metric Jordan curve} is a Jordan curve with a metric on it.  A \emph{Jordan arc} is the \homic\ image of the unit interval $[0,1]$ and a \emph{metric Jordan arc} is a Jordan arc with a metric on it.  Thus Jordan curves and arcs are non-degenerate compact spaces, where non-degenerate means not a single point.

Given distinct points $x,y$ on a metric Jordan curve $\Gam$, we write $\Gam[x,y]$ to denote the closure of the smaller diameter component of $\Gam\sm\{x,y\}$; when both components have the same size, we randomly pick one.  We often fix an orientation on $\Gam$, and then $[x,y]$ stands for the subarc of $\Gam$ that joins $x$ to $y$.

We note the following easy consequence of uniform continuity.

\begin{lma}  \label{L:finite} %
Let \,$\Gam$ be a metric Jordan curve or arc.  Then for each $\veps>0$, there are at most finitely many non-overlapping subarcs of \,$\Gam$ that all have diameter at least $\veps$. 
\end{lma}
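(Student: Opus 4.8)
The plan is to derive finiteness from the uniform continuity of a parametrizing homeomorphism. Fix a homeomorphism $\gamma\colon\mfS^1\to\Gam$ (in the arc case, $\gamma\colon[0,1]\to\Gam$). Since $\mfS^1$ (or $[0,1]$) is compact, $\gamma$ is uniformly continuous, so given $\veps>0$ I can choose $\del>0$ with the property that $\lam(s,t)<\del$ implies $d(\gamma(s),\gamma(t))<\veps$, where $\lam$ is the arc-length metric on $\mfS^1$ (the usual metric on $[0,1]$ in the arc case) and $d$ is the metric on $\Gam$.

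First I would note that every subarc of $\Gam$ has the form $\gamma(J)$ for a subarc $J$ of the domain, because $\gamma^{-1}$ carries a subarc of $\Gam$ to a compact, connected, proper subset of $\mfS^1$, which is a subarc. Moreover $\gamma$ is a homeomorphism, so it matches interiors with interiors; hence a family of pairwise non-overlapping subarcs of $\Gam$ pulls back to a family of subarcs of the domain with pairwise disjoint interiors. The crux is then the estimate: if $\diam\gamma(J)\ge\veps$, then $J$ has $\lam$-length at least $\del$. Indeed, $\gamma(J)$ is compact, so its diameter is attained by some pair $s,t\in J$; then $d(\gamma(s),\gamma(t))=\diam\gamma(J)\ge\veps>0$, so $s\neq t$ and, by the choice of $\del$, $\lam(s,t)\ge\del$. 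Since $J$ is an arc containing both $s$ and $t$, one of the two subarcs of $\mfS^1$ joining $s$ to $t$ is contained in $J$, and each of these has $\lam$-length at least $\lam(s,t)\ge\del$; thus $\ell(J)\ge\del$. (In the arc case this is immediate, as $J\supseteq[s,t]$ has length $\ge|s-t|\ge\del$.)

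Finally I would close with a counting argument: if $J_1,\dots,J_k$ are subarcs of the domain with pairwise disjoint interiors and each $\ell(J_i)\ge\del$, then
$$\sum_{i=1}^{k}\ell(J_i)\le \ell(\mfS^1)=1,$$
so $k\le 1/\del$, and the same bound therefore limits the number of non-overlapping subarcs of $\Gam$ of diameter at least $\veps$. I do not expect a genuine obstacle here — this is a soft compactness argument. The only points requiring a little care are matching ``non-overlapping'' on $\Gam$ with ``disjoint interiors'' on the parameter space through the homeomorphism, and passing cleanly from diameter to $\lam$-length, which I handle by using that the diameter of a compact metric space is attained (giving an honest pair at distance $\ge\veps$ rather than merely a near-supremal one).
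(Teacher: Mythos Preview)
Your proof is correct and follows essentially the same approach as the paper: both arguments parametrize $\Gam$ by a homeomorphism from $\mfS^1$ (or $[0,1]$), invoke uniform continuity to show that a large-diameter subarc must have a preimage of $\lam$-length at least some $\del>0$, and then bound the count. The only cosmetic difference is that the paper finishes with a pigeonhole argument (partitioning $\mfS^1$ into $N\approx1/\del$ equal arcs and noting each large subarc must swallow one of them), whereas you sum lengths directly to get $k\le1/\del$.
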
 
\begin{proof}%
Suppose $\Gam=\vphi(\mfS^1)$ for some \homeo\ $\vphi$.  Let $\veps>0$ be given.  Choose $\del>0$ so that for each subarc $I\subset\mfS^1$ with $\diam_\lam(I)<\del$ we have $\diam(\vphi(I))<\veps/2$.  Pick $N\in\mfN$ with $1/N<\del$.  Partition $\mfS^1$ into adjacent equal length subarcs $I_1,\dots,I_N$.

Let $A$ be a subarc of $\Gam$ with $\diam(A)\ge\veps$.  Then $A$ must contain at least one of the subarcs $\vphi(I_i)$.  Thus there are at most $N$ such subarcs $A$.

A similar argument applies when $\Gam$ is an arc.
\end{proof}%

Throughout this article we employ the Polish notation $\abs{x-y}$ for the distance between points $x,y$ in a metric space.
The \emph{bounded turning condition} (BT), also called \emph{Ahlfors' three point condition}, makes sense in any connected metric space: this holds whenever points can be joined by continua whose diameters are no larger than a fixed constant times the distance between the original points.  To be precise, given a constant $C\ge1$, we say that $X$ has the {\em $C$-bounded turning\/} property if each pair of points $x,y\in X$ can be joined by a continuum $\Gam[x,y]$ satisfying (BT).
The bounded turning condition has a venerable position in quasiconformal analysis; see for example \cite{TV-qs}, \cite{G-qdisks}, \cite{NV-John}, \cite{Tukia-bt} and the references therein.

A metric Jordan curve that is bounded turning is called a \emph{\bt\ circle}, or a \emph{$C$-\bt\ circle} if we wish to indicate the \bt\ constant $C$.

\subsection{Assouad Dimension} \label{s:assouad-dimension}
A metric space is \emph{doubling} if there is a number $N$ such that every subset of diameter $D$ has a cover that consists of at most $N$ subsets each having diameter at most $D/2$.  It follows that every set of diameter $D$ has a cover by (at most) $N^k$ sets each of diameter at most $D/2^k$.

The \emph{Assouad dimension} $\dimA(X)$ of a metric space $X$ is the infimum of all numbers $\alpha>0$ with the property that there exists a constant $C>0$ such that for all $D>0$, each subset of diameter $D$ has a cover consisting of at most $C \veps^{-\alpha}$ sets each of diameter at most $\veps D$.

An equivalent description can be given in terms of separated sets.  A subset $S\subset X$ is $r$-\emph{separated} provided it is non-degenerate, meaning  $\card(S)>1$, and for all distinct $x,y\in S$, $|x-y|\ge r$; in particular, $\diam(S)\ge r$.  Then  $\dimA(X)$ is the infimum of all numbers $\alpha>0$ with the property that there exists a constant $C>0$ such that for all $r>0$, each $r$-separated set $S\subset X$ has $\card(S)\le C (\diam(S)/r)^{\alpha}$.

Evidently, a metric space has finite Assouad dimension if and only if it is doubling.  The Assouad dimension was introduced by Assouad in \cite{Assouad-thesis} (see also \cite{Assouad-dim}).  A comprehensive overview is given in \cite{Luuk-ass-dim}.  The role of doubling spaces in the general theory of quasisymmetric maps is explained in \cite{Juha-analysis}.  The Assouad dimension of a space is a \bl\ invariant, and it is always at least the Hausdorff dimension.

\showinfo{
\begin{lma} \label{L:lma} %
Suppose $\dimA(X)<\beta$.  Then there is an $\veps_0\in(0,1]$ such that for all $\veps\in(0,\veps_0)$, the cardinality of any $\veps d$-separated set $S\subset X$ with $d=\diam(S)$ satisfies  $\card(S) < \veps^{-\beta}$.
\end{lma}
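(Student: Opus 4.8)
The plan is to read this off directly from the separated-sets formulation of Assouad dimension recorded just above. First I would choose an auxiliary exponent $\alf$ with $\dimA(X)<\alf<\beta$, which is possible precisely because $\dimA(X)<\beta$. By the definition of $\dimA(X)$ as an infimum, there is then a constant $C>0$ such that for every $r>0$, every $r$-separated set $T\subset X$ satisfies $\card(T)\le C\,(\diam(T)/r)^{\alf}$.

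Next I would specialize this to the sets in question. Let $S\subset X$ be $\veps d$-separated with $d=\diam(S)$; since $S$ is non-degenerate we have $d>0$, hence $r:=\veps d>0$ and the displayed inequality applies with $T=S$. Because $\diam(S)/r=d/(\veps d)=\veps^{-1}$, it yields $\card(S)\le C\,\veps^{-\alf}$.

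It remains to absorb the constant $C$ into a gain in the exponent. Since $\beta-\alf>0$, the quantity $\veps^{-(\beta-\alf)}$ tends to $\infty$ as $\veps\downarrow 0$, so $\veps_0:=\min\{1,\,C^{-1/(\beta-\alf)}\}\in(0,1]$ has the property that $C<\veps^{-(\beta-\alf)}$ for every $\veps\in(0,\veps_0)$. For such $\veps$ we then get $\card(S)\le C\,\veps^{-\alf}<\veps^{-(\beta-\alf)}\veps^{-\alf}=\veps^{-\beta}$, as desired; note also $\veps_0\le 1$, which is needed just so that an $\veps d$-separated set of diameter $d$ can exist at all.

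There is no substantive obstacle here: the statement is essentially a restatement of the definition of Assouad dimension, with a strict inequality in place of the ``$\le$ up to a constant''. The only points demanding a little care are the introduction of the intermediate exponent $\alf$ (one cannot take $\alf=\dimA(X)$ in general, as the case $X=\R^n$ shows) and the explicit choice of $\veps_0$ small enough both to swallow $C$ and to make the inequality strict.
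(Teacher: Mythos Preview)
Your argument is correct and essentially identical to the paper's: both pick an intermediate exponent strictly between $\dimA(X)$ and $\beta$ (the paper makes the specific choice $\gam=(\dimA(X)+\beta)/2$, you leave $\alf$ arbitrary), invoke the separated-sets definition to get $\card(S)\le C\,\veps^{-\alf}$, and then choose $\veps_0$ small enough that $C<\veps^{-(\beta-\alf)}$. Your explicit formula $\veps_0=\min\{1,C^{-1/(\beta-\alf)}\}$ is in fact slightly more concrete than the paper's version.
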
 
\begin{proof}%
Put $\alf:=\dimA(X)$ and $\gam:=(\alf+\beta)/2$.  Then $\gam>\dimA(X)$, so there exists a constant $C:=C(\gam)\ge2$ such that for all $d>0$, all $\veps\in(0,1]$, and each $\veps d$-separated $S\subset X$ with $d=\diam(S)$, $\card(S)\le C\, \veps^{-\gam}$.  Since $\beta-\gam=(\beta-\alf)/2>0$, there exists $t_0=t_0(\alf,\beta,C)>0$ such that for all $t>t_0$, $t^{\beta-\gam}>C$.
Let $\veps_0:=1/t_0$.  Then for all $\veps\in(0,\veps_0)$ and all $\veps d$-separated sets $S\subset X$ with $d=\diam(S)$,
$$
  \card(S) \le C\, \veps^{-\gam} < \veps^{\gam-\beta} \veps^{-\gam} =  \veps^{-\beta} \,.
$$
\flag{So, $\veps_0=\veps(\alf,\beta)$ where $\alf:=\dimA(X)$, but this is a bit hokey...  However, this kind of quantitativeness is needed to bound the BL constant in part (B).}
\end{proof}%
}

\subsection{Quasisymmetric Homeomorphisms} \label{s:QS} 
A homeomorphism $X\overset{f}\to Y$ of metric spaces $X,Y$ is called a \emph{quasisymmetry} if there is a homeomorphism $\eta \colon
[0,\infty) \to [0,\infty)$ such that for all distinct $x,y,z \in X$ and $t\in [0,\infty)$,
\begin{equation*}
  \frac{\abs{x-y}}{\abs{x-z}} \leq t
  \quad\implies\quad
  \frac{\abs{f(x)-f(y)}}{\abs{f(x)-f(z)}} \leq \eta(t) .
\end{equation*}

This notion of \qsy\ was introduced by Tukia and \Va\ in \cite{TV-qs} where they also studied \emph{weak-quasisymmetries}.  A homeomorphism $f\colon X\to Y$ is a weak-\qsy\ if there is a constant $H\geq 1$, such that for all distinct $x,y,z\in X$,
\begin{equation*}
  \frac{\abs{x-y}}{\abs{x-z}} \leq 1
  \quad\implies\quad
  \frac{\abs{f(x)-f(y)}}{\abs{f(x)-f(z)}} \leq H.
\end{equation*}
Clearly every \qsy\ is a weak-\qsy.  Tukia and \Va\ proved that each weak-\qsy\ from a pseudo-convex space to a doubling space is a \qsy\ \cite[Theorem~2.15]{TV-qs}; Heinonen has a similar result for maps from a connected doubling space to a doubling space \cite[Theorem~10.19]{Juha-analysis}.  In particular, this holds for maps between Euclidean spaces.  However, a weak-\qsy\  may fail to be \qsc\ if the target space is not doubling, as illustrated by an example in the paper by Tukia and \Va.

As discussed in the Introduction, a \emph{metric quasicircle} is the quasisymmetric image of $\mfS^1$; thanks to work of Tukia and \Va, we know that these are precisely the doubling \bt\  circles.  Recently the second author \cite{Meyer-weakQS} established the following characterization of \bt\ circles.
\begin{noname}
  A metric Jordan curve is \bt\ if and only if it is a \emph{weak-quasisymmetric} image of the unit circle.
\end{noname}

\subsection{Diameter Distance} \label{s:IDD} 
\renewcommand{\thefootnote}{\fnsymbol{footnote}}    
\setcounter{footnote}{1}                            

Here we show that we can always restrict attention to $1$-bounded turning circles.  More precisely, we show that any bounded turning circle is bi-Lipschitz equivalent to a $1$-bounded turning circle.  The relevant tool employed is the notion of \emph{diameter distance}\footnote{This is also called \emph{inner diameter distance}.} $\dia$ that is defined on any path connected metric space $(X,\ed)$ by
$$
  \dia(x,y):=\inf\{\diam(\gam) \mid \gam \;\text { a path in $X$ joining $x,y$ }\} \,.
$$
It is not hard to see that $\dia$ is a metric on $X$.
Here are some additional properties of $\dia$. 

\begin{lma} \label{L:BT iff BL} %
Let $(\Gam,\ed)$ be a metric Jordan curve or a metric Jordan arc and let $\dia$ be the associated diameter distance.
\begin{enumerate}
  \item[\rm(a)]  \label{item:dia3}
    The $\dia$-diameter of any subarc $A$ of $\,\Gam$ equals its diameter \wrt the original metric on $X$; that is, $\diam_{\dia}(A) = \diam(A)$.

  \item[\rm(b)]  \label{item:dia4}
    For all points $x,y\in\Gam$, $\diam_{\dia}(\Gam[x,y])=\dia(x,y)$.  In particular, $(\Gam,\dia)$ is $1$-\bt.

  \item[\rm(c)]  \label{item:dia2}
    $(\Gam,\ed)$ is $C$-\bt\ \ifff the identity map $(\Gam,\dia)\overset{\id}\to(\Gam,\ed)$ is $C$-\bl.

\end{enumerate}
\end{lma}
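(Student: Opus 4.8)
The plan is to derive all three parts from a single observation: on a Jordan arc or curve, the image of any path joining two points $x,y$ contains (a homeomorphic copy of) a subarc that itself joins $x$ to $y$, and passing to such a subarc never increases the diameter. Consequently the infimum defining $\dia(x,y)$ is realized by one of the (one or two) subarcs of $\Gam$ joining $x$ to $y$. With this in hand each item is short.

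For \rm(a): one inequality is free, since any path joining $x,y$ has diameter at least $\abs{x-y}$, whence $\dia(x,y)\ge\abs{x-y}$ for all $x,y$ and so $\diam_{\dia}(A)\ge\diam(A)$. For the reverse, given $x,y$ in a subarc $A$, the sub-subarc of $A$ joining $x$ to $y$ is a path in $\Gam$ of diameter at most $\diam(A)$, so $\dia(x,y)\le\diam(A)$; taking the supremum over $x,y\in A$ gives $\diam_{\dia}(A)\le\diam(A)$.

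For \rm(b): by the opening observation, when $\Gam$ is a curve every path from $x$ to $y$ has connected image containing one of the two arcs of $\Gam$ cut off by $\{x,y\}$, hence has diameter at least the smaller of the two arc-diameters, i.e., at least $\diam(\Gam[x,y])$; since $\Gam[x,y]$ is itself realized by a path, $\dia(x,y)=\diam(\Gam[x,y])$. (When $\Gam$ is an arc there is a unique subarc joining $x,y$, and it plays the role of $\Gam[x,y]$.) By \rm(a) the $\dia$-diameters of the two complementary arcs are ordered the same way as their original diameters, so $\Gam[x,y]$ denotes the same object in both metrics, and \rm(a) then yields $\diam_{\dia}(\Gam[x,y])=\diam(\Gam[x,y])=\dia(x,y)$. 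Taking $\Gam[x,y]$ as the required continuum shows $(\Gam,\dia)$ is $1$-\bt.

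For \rm(c): since $\dia(x,y)\ge\abs{x-y}$ pointwise, the identity $(\Gam,\dia)\overset{\id}\to(\Gam,\ed)$ is automatically $1$-Lipschitz, so it is $C$-\bl\ precisely when $\dia(x,y)\le C\abs{x-y}$ for all $x,y$. By \rm(b), $\dia(x,y)=\diam(\Gam[x,y])$, so this inequality is exactly the assertion that $(\Gam,\ed)$ is $C$-\bt\ in the sense of \eqref{eq:bt}. Hence the two conditions are equivalent, with the same constant. The only step that needs genuine attention is the opening observation — that a path cannot ``shortcut'' beneath the smaller subarc $\Gam[x,y]$ — together with keeping the curve and arc cases straight; beyond that the argument is routine and there is no real obstacle.
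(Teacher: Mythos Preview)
Your proof is correct and follows essentially the same approach as the paper: both arguments hinge on the identity $\dia(x,y)=\diam(\Gam[x,y])$ and the pointwise inequality $\abs{x-y}\le\dia(x,y)$, then chain these through (a), (b), (c) in the same way. You are slightly more explicit than the paper in justifying why any path from $x$ to $y$ must cover one of the two complementary subarcs (and hence why the infimum defining $\dia$ is realized by $\Gam[x,y]$), which the paper takes as evident.
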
 
\begin{proof}%
To prove (a), first observe that for all $x,y\in \Gamma$, $|x-y|\le\dia(x,y)$, so $\diam(A)\le\diam_{\dia}(A)$.  Next, for all $x,y\in A$, $\dia(x,y)\le\diam(A)$, so $\diam_{\dia}(A)\le\diam(A)$.

Now (b) follows directly from (a) since
$$
  \dia(x,y)=\diam(\Gam[x,y])=\diam_{\dia}(\Gam[x,y]) \,.
$$

It remains to establish (c).  If $(\Gam,\ed)$ is $C$-\bt, then for all $x,y\in\Gam$
$$
  \dia(x,y)=\diam(\Gam[x,y])\le C\, |x-y| \le C\,\dia(x,y)
$$
so the identity map is $C$-\bl.  Conversely, if this map is $C$-\bl, then for all $x,y\in\Gam$
$$
  \diam(\Gam[x,y])=\diam_{\dia}(\Gam[x,y])=\dia(x,y)\le C\, |x-y|
$$
and therefore $(\Gam,\ed)$ is $C$-\bt.
\end{proof}%

We remark that in general the identity map $(X,\dia) \xrightarrow{\id}
(X,\ed)$ need not be a homeomorphism.  A simple example of this is the planar \emph{comb space}
$$
  X:=\lp [0,1]\times\{0\}\rp \cup \lp \{0\}\times[0,1] \rp \bigcup_{n=1}^\infty \lp \{1/n\}\times[0,1] \rp \subset\mfR^2
$$
equipped with Euclidean distance $\ed$.  If $z_n:=(1/n,1)$ and $a:=(0,1)$, then $|z_n-a|\to0$ as $n\to\infty$, whereas $\dia(z_n,a)\ge 1$ for all $n$.  Also, $(X,\ed)$ is compact but $(X,\dia)$ is not.
\showinfo{In fact, $\id$ will be a \homeo \ifff $(X,\ed)$ is locally path connected.  This is in my notes, but perhaps need not be mentioned here.}

\subsection{Division of Arcs}  \label{s:dividing-arcs} 
Here we prove that any metric Jordan arc can be divided into any given number of subarcs each having exactly the same diameter.

The problem of finding points on a metric Jordan arc such that consecutive points are at the same distance is non-trivial.  In 1930 Menger gave a proof \cite[p.\ 487]{Menger}, that is short, simple and natural, but wrong.  It was proved for arcs in Euclidean space in \cite{Alt-Beer}, and in the general case (indeed in more generality) in \cite[Theorem 3]{Schoenberg}; see also \cite{V-dividing}.

For the case at hand, i.e., for bounded turning arcs, it suffices to find adjacent subarcs that have equal diameter.  We give the following elementary proof for this problem.

\begin{prop} \label{P:equi_diam} 
  Let $A$ be a metric Jordan arc and $N\geq 2$ an integer. Then we can divide $A$ into $N$ subarcs of equal diameter.
\end{prop}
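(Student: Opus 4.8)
The plan is to parametrize the arc by a homeomorphism $\varphi\colon[0,1]\to A$ and, for $0\le s\le t\le 1$, to write $A[s,t]:=\varphi([s,t])$. Two elementary facts drive everything. First, the map $(s,t)\mapsto\diam(A[s,t])$ is continuous: by uniform continuity of $\varphi$ the map $t\mapsto\varphi([0,t])$ is continuous into the space of compact subsets of $A$ with the Hausdorff metric, on which $\diam$ is $1$-Lipschitz. Second, subarc diameters are monotone under inclusion of parameter intervals: $[s,t]\subseteq[s',t']$ forces $\diam(A[s,t])\le\diam(A[s',t'])$. I will prove the statement by induction on $N$, the case $N=1$ being trivial.

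For the inductive step, assume every metric Jordan arc splits into $N-1$ subarcs of equal diameter. Given $c\in[0,1)$, consider all subdivisions $c=v_0\le v_1\le\dots\le v_{N-1}=1$ of $A[c,1]$ into $N-1$ (possibly degenerate) pieces, and set
$$
  D(c):=\min\Bigl\{\,\max_{1\le i\le N-1}\diam\bigl(A[v_{i-1},v_i]\bigr)\ :\ c=v_0\le\dots\le v_{N-1}=1\,\Bigr\}.
$$
The minimum exists since, after the affine reparametrization $v_j=c+\mu_j(1-c)$, the quantity is a continuous function on the product of $[0,1]$ (the $c$-variable) with a fixed compact simplex (the $\mu$-variable); uniform continuity on this compact set also shows $c\mapsto D(c)$ is continuous on $[0,1)$, and since $D(c)\le\diam(A[c,1])\to0$ as $c\to1$ it extends continuously to $[0,1]$ with $D(1)=0$. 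Also $D(0)>0$, since by the induction hypothesis it is the common diameter of an equal subdivision of $A$ into $N-1$ non-degenerate subarcs.

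The crux is to show $D(c)$ is realized by an \emph{equal}-diameter subdivision of $A[c,1]$. By the induction hypothesis there is a subdivision $c=t_0\le\dots\le t_{N-1}=1$ with all pieces of a common diameter $D^\ast$, so $D^\ast\ge D(c)$. Conversely, were $D(c)<D^\ast$, pick a subdivision $c=v_0\le\dots\le v_{N-1}=1$ with every piece of diameter $\le D(c)<D^\ast$; comparing the two subdivisions I claim $v_i<t_i$ for all $1\le i\le N-1$, which is absurd at $i=N-1$ since both equal $1$. Indeed $v_1<t_1$ because $\diam(A[c,v_1])\le D(c)<D^\ast=\diam(A[c,t_1])$ and this diameter is nondecreasing in the right endpoint; and if $v_i<t_i$ but $v_{i+1}\ge t_{i+1}$, then $[t_i,t_{i+1}]\subseteq[v_i,v_{i+1}]$, forcing $\diam(A[v_i,v_{i+1}])\ge D^\ast>D(c)$, a contradiction. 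Hence $D^\ast=D(c)$.

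Finally, put $g(c):=\diam(A[0,c])-D(c)$ on $[0,1]$. It is continuous, $g(0)=-D(0)<0$, and $g(1)=\diam(A)>0$, so the intermediate value theorem supplies $c^\ast\in(0,1)$ with $\diam(A[0,c^\ast])=D(c^\ast)$. Subdividing $A[c^\ast,1]$ into $N-1$ subarcs of common diameter $D(c^\ast)$ and adjoining the subarc $A[0,c^\ast]$ — which has the same diameter $D(c^\ast)$ and is non-degenerate because $c^\ast>0$ — produces the desired subdivision of $A$ into $N$ subarcs of equal diameter. The one genuinely non-routine ingredient is the ``telescoping'' comparison used to show that $D(c)$ is attained by an equal subdivision; that is the only place where monotonicity of subarc diameters under inclusion enters. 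Everything else is continuity bookkeeping (including the boundary behaviour at $c=0,1$) and two applications of the intermediate value theorem.
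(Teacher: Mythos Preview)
Your proof is correct and takes a genuinely different route from the paper's. The paper does not induct on $N$; instead it treats all $N$ cut points at once, minimizing $\varphi(\mathbf{s})=\max_i\diam[s_{i-1},s_i]-\min_j\diam[s_{j-1},s_j]$ over the simplex of subdivisions. The obstacle is that at a putative minimizer one can equalize two adjacent unequal pieces via the IVT, but without \emph{strict} monotonicity of subarc diameters this need not strictly decrease $\varphi$. The paper's remedy is to pass to the diameter distance and then perturb to $d_\varepsilon(s,t):=d(s,t)+\varepsilon\,|s-t|$, for which diameters \emph{are} strictly monotone; this forces $\min\varphi=0$ for each $\varepsilon$, and a compactness/limiting argument as $\varepsilon\to0$ finishes. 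Your telescoping comparison $v_i<t_i$ sidesteps both the perturbation and the limit: weak monotonicity is enough because you compare two entire subdivisions rather than trying to strictly improve a single one. The trade-off is the inductive scaffolding and the auxiliary function $D(c)$; the paper's argument is non-inductive but needs the $\varepsilon$-device. One small expository point: your claim that $D(0)>0$ forward-references the crux paragraph, but it also follows directly since any cover of $A$ by $N-1$ subarcs has maximum diameter at least $|\varphi(0)-\varphi(1)|/(N-1)>0$.
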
 
\begin{proof}%
We may assume that $A$ is the unit interval $[0,1]$ equipped with some metric $d$.  We claim that there are points $0=s_0 < s_1 < \dots < s_{N-1} < s_N=1$ such that
\begin{equation*}
  \diam[s_0,s_1] = \diam [s_1,s_2] = \dots = \diam[s_{N-1},s_N]
\end{equation*}
where $\diam$ denotes diameter with respect to the metric $d$.  When $N=2$ this follows by applying the Intermediate Value Theorem to the function $[0,1]\ni s\mapsto \diam [0,s] - \diam[s,1]$.

According to \rf{L:BT iff BL}(a), we may replace $d$ by its associated diameter distance; thus we may assume from the start that for any $[s,t]\subset [0,1]$
\begin{equation} \label{eq:d_diam}
  d(s,t) = \diam [s,t] \,.
\end{equation}

\smallskip
Next, we modify $d$ to get a metric $d_\veps$ that is \emph{strictly increasing} in the sense that
\begin{equation} \label{eq:de_strictly_inc}
  [s,t] \subsetneq [s',t'] \subset[0,1] \implies d_\veps(s,t) <  d_\veps(s',t') \,.
\end{equation}
The crucial point here is the \emph{strict} inequality, which need not hold in general.
To this end, fix $\veps>0$ and for all $s,t\in [0,1]$ set
\begin{equation*}
  d_\veps(s,t) := d(s,t) +\veps\abs{t-s} \,.
\end{equation*}
Then from \eqref{eq:d_diam} it follows that
\begin{equation*}
  \diam_\veps [s,t] = \diam[s,t] + \veps\abs{t-s} =
  d_\veps(s,t) \,,
\end{equation*}
where $\diam_\veps$ denotes diameter with respect to
$d_\veps$.  This immediately implies \eqref{eq:de_strictly_inc}.

\smallskip
We now show that $[0,1]$ can be divided into $N$ subintervals of
equal $d_\veps$-diameter.
Consider the compact set $S:=\{\mathbf{s}=(s_1,\dots, s_{N-1}) \mid 0\leq s_1 \leq
\dots \leq s_{N-1}\leq 1\}$. Set $s_0:=0, s_N:=1$. The function
$\varphi \colon S\to \mfR$ defined by
\begin{equation*}
  \varphi(\mathbf{s}) := \max_{0\leq i\leq N-1} \diam_\veps [s_i,s_{i+1}] -
  \min_{0\leq j\leq N-1} \diam_\veps [s_j,s_{j+1}]
\end{equation*}
assumes a minimum on $S$. If this minimum is zero, we are
done. Otherwise, there are adjacent intervals $[s_{i-1}, s_{i}],
[s_{i}, s_{i+1}]$ that have different $d_\veps$-diameter. Using the
Intermediate Value Theorem as before, we can find $s'_i\in [s_{i-1},
s_{i+1}]$ such that $\diam_\veps [s_{i-1},s'_i] =
\diam_\veps[s'_i, s_{i+1}]$. Then from (\ref{eq:de_strictly_inc}) it
follows that
\begin{align*}
  \min_{0\leq j < N} \diam_\veps [s_j,s_{j+1}]
  & <
  \diam_\veps [s_{i-1},s'_i] \\
  & =
  \diam_\veps[s'_i, s_{i+1}]
  <
  \max_{0\leq i < N} \diam_\veps [s_i,s_{i+1}].
\end{align*}
Applying this procedure to all subintervals of maximal
$d_\veps$-diameter we obtain a strictly smaller
minimum for the function $\varphi$, which is impossible. Thus the minimum must be zero,
and so we can subdivide $[0,1]$ into $N$ subintervals of equal
$d_\veps$-diameter.

\smallskip
Consider now a sequence $\veps_n\searrow 0$, as $n\to
\infty$. Let $s_1^n< \dots < s_{N-1}^n$ be the points that divide
$[0,1]$ into $N$
subintervals of equal diameter with respect to $d_{\veps_n}$. We
can assume that for all $1\leq j < N$, all points $s^n_j$ converge
to $s_j$ as $n\to\infty$. It follows that
for all $1\leq i,j < N$,
\begin{equation*}
  \diam[s_i,s_{i+1}] = \lim_{n\to\infty} \diam_{\veps_n} [s^n_i,s^n_{i+1}] =
  \lim_{n\to\infty} \diam_{\veps_n} [s^n_j,s^n_{j+1}] = \diam[s_j, s_{j+1}]
\end{equation*}
as desired.
\end{proof}%

The previous Lemma is also true for metric Jordan curves $\Gamma$.  In this case we are free to choose any point in $\Gamma$ to be an endpoint of one of the subarcs.

\subsection{Shrinking Subdivisions}  \label{s:shrinking} 
Here we present a useful tool for constructing homeomorphisms between Jordan curves; see \rf{P:subdiv_homeo}.

We begin with some terminology.
Let $\Gam$ be a metric Jordan curve or arc.
A sequence $(\mcA^n)_1^\infty$ is a \emph{shrinking subdivision} for $\Gam$ provided:
\begin{itemize}
  \item
  Each $\mcA^n$ is a finite \emph{decomposition} of $\Gam$ into compact arcs.
  Thus each $\mcA^n$ is a finite set of non-overlapping non-degenerate compact
  subarcs of $\Gam$ that cover $\Gam$.  (Here non-overlapping means disjoint
  interiors and non-degenerate means not a single point.)

  \item
  Each $\mcA^{n+1}$ is a \emph{subdivision} of $\mcA^n$; i.e., for each
  arc $A$ in $\mcA^{n+1}$ there is a (unique) arc in $\mcA^n$, called the \emph{parent of $A$}, that contains $A$.

  \item
  The subdivisions \emph{shrink}, meaning that $\ds  \max_{A\in\mcA^n} \diam(A) \to 0 \text{ as } n\to \infty$.
\end{itemize}


Assume $(\mcA^n)_1^\infty$ is a shrinking subdivision for $\Gam$.  We call $(A^n)_1^\infty$ a \emph{descendant sequence} if $A^1\supset A^2\supset\dots$ and $A^n\in\mcA^n$ for all $n\in\mfN$; thus each $A^n$ is the parent of $A^{n+1}$.  Note that for any descendant sequence $(A^n)_1^\infty$, $\bigcap_1^\infty A^n$ is a single point.  Also, for each point $x\in\Gam$, there exists a descendant sequence $(A_x^n)_1^\infty$ with $\{x\}=\bigcap_1^\infty A_x^n$; such a descendant sequence need not be unique, but there can be at most two such sequences.

Shrinking subdivisions are useful for constructing homeomorphisms between metric Jordan curves; see \rf{s:A}, \rf{s:B}, \rf{s:C}.

\begin{prop} \label{P:subdiv_homeo} %
Let $\mfA$ and $\mfB$ both be metric Jordan curves or metric Jordan arcs.  Suppose $(\mcA^n)_1^\infty$ and $(\mcB^n)_1^\infty$ are shrinking subdivisions for $\mfA$ and $\mfB$ respectively.  Assume these subdivisions are \emph{combinatorially equivalent}, meaning that for each $n\in\mfN$ there are bijective maps $\Phi^n\colon \mcA^n \to \mcB^n$ such that for all $A, \tilde{A}\in \mcA^n$ and $A_0\in \mcA^{n+1}$
\begin{align*}
  A \cap \tilde{A} = \emptyset &\quad\iff\quad \Phi^n(A) \cap \Phi^n(\tilde{A}) =\emptyset\,,  \\
  A_0 \subset A &\quad\iff\quad \Phi^{n+1}(A_0) \subset  \Phi^{n}(A) \,.
\end{align*}
Then the sequence $(\Phi^n)_1^\infty$ induces a \emph{homeomorphism} $\mfA\overset{\vphi} \to \mfB$ with the property that
$$
  \text{for all $n\in\mfN$ and all }\; A\in\mcA^n \,,\quad \varphi(A) =\Phi^n(A) \,.
$$
\end{prop}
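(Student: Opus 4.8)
The plan is to define $\varphi$ on each point of $\mfA$ by tracking descendant sequences through the combinatorial equivalence, then verify that this is a well-defined bijection and a homeomorphism. First I would fix a point $x\in\mfA$ and choose a descendant sequence $(A_x^n)_1^\infty$ with $\{x\}=\bigcap_1^\infty A_x^n$ (such a sequence exists by the discussion preceding the Proposition). Applying the bijections $\Phi^n$ gives a nested sequence $(\Phi^n(A_x^n))_1^\infty$ in $\mfB$: it is nested because $A_x^{n+1}\subset A_x^n$ forces $\Phi^{n+1}(A_x^{n+1})\subset\Phi^n(A_x^n)$ by the second combinatorial equivalence, and it is a descendant sequence for $(\mcB^n)_1^\infty$. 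Since the $\mcB^n$ shrink, $\bigcap_1^\infty\Phi^n(A_x^n)$ is a single point, and I define $\varphi(x)$ to be that point.

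The first substantive step is to check that $\varphi(x)$ does not depend on the choice of descendant sequence for $x$. If $x$ is an endpoint of arcs at level $n$ there are at most two descendant sequences through it; I would argue that these two sequences eventually consist of adjacent arcs $A^n, \tilde A^n$ with $A^n\cap\tilde A^n\ni x$, so by the first combinatorial equivalence $\Phi^n(A^n)\cap\Phi^n(\tilde A^n)\neq\emptyset$, and their common point (the unique shared endpoint of adjacent arcs whose diameters shrink) is forced to be the same limit. This is the key bookkeeping point and also establishes the stated property $\varphi(A)=\Phi^n(A)$: if $x\in A\in\mcA^n$, one can choose a descendant sequence through $x$ with $A_x^n=A$, so $\varphi(x)\in\Phi^n(A)$; and conversely every point of $\Phi^n(A)$ arises this way because $\Phi^n$ is onto the subarcs covering $\Phi^n(A)$ at deeper levels. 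The same argument run with $(\Phi^n)^{-1}$ produces a map $\psi\colon\mfB\to\mfA$, and the compositions $\psi\circ\varphi$, $\varphi\circ\psi$ fix every descendant sequence, hence are the identity; so $\varphi$ is a bijection.

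Finally I would prove continuity of $\varphi$ (continuity of $\varphi^{-1}=\psi$ being symmetric), which combined with compactness of $\mfA$ gives a homeomorphism. Given $x\in\mfA$ and $\veps>0$, use shrinking to pick $n$ with $\max_{B\in\mcB^n}\diam(B)<\veps/2$; pick $\del>0$ smaller than the distance from $x$ to every arc of $\mcA^n$ not containing $x$ (there are at most two arcs $A^n$ containing $x$, meeting at $x$). If $|x-y|<\del$ then $y$ lies in one of those one or two arcs, so $\varphi(y)$ lies in $\Phi^n$ of one of them, and since $\varphi(x)$ lies in both images $\Phi^n(A^n)$, which meet and each have diameter $<\veps/2$, we get $|\varphi(x)-\varphi(y)|<\veps$. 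The main obstacle is the endpoint/well-definedness analysis in the second paragraph: one has to handle the finitely-many ``bad'' points where the descendant sequence branches, and confirm that the combinatorial conditions force the two branches to converge to the same point in $\mfB$ and that $\varphi$ respects the cyclic (or linear) order so that it is genuinely a homeomorphism of the circle (or arc) rather than merely a continuous bijection on a Cantor-like quotient; order-preservation follows from the adjacency condition on the $\Phi^n$, and fullness/continuity then upgrade it to a homeomorphism. The remaining verifications are routine.
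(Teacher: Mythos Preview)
Your proposal is correct and follows essentially the same route as the paper: define $\varphi$ via descendant sequences pushed through the $\Phi^n$, check well-definedness using that two descendant sequences for the same point have intersecting $n$th arcs (hence intersecting images, hence limits at distance $\le\diam(B^n)+\diam(\tilde B^n)\to0$), obtain bijectivity, and prove continuity by choosing $n$ with $\max_{B\in\mcB^n}\diam(B)<\veps/2$. The paper streamlines two points you can adopt: for well-definedness it uses only the diameter estimate above rather than any endpoint or order analysis, and for continuity it takes the global $\delta:=\min\{\dist(A_1,A_2):A_1,A_2\in\mcA^n,\ A_1\cap A_2=\emptyset\}$, giving uniform continuity directly and making your order-preservation discussion unnecessary (a continuous bijection from a compact space to a Hausdorff space is automatically a homeomorphism).
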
 
\begin{proof}%
Let $a\in\mfA$ and select a descendant sequence $(A^n)_1^\infty$ with $\{a\}=\bigcap_1^\infty A^n$.  Setting $B^n:=\Phi^n(A^n)$ we obtain a descendant sequence $(B^n)_1^\infty$ with, say, $\{b\}:=\bigcap_1^\infty B^n$.  Suppose $(\tilde{A}^n)_1^\infty$ is a second descendant sequence with $\{a\}=\bigcap_1^\infty \tilde{A}^n$.  Let $\tilde{B}^n:=\Phi^n(\tilde{A}^n)$ and $\{\tilde{b}\}=\bigcap_1^\infty \tilde{B}^n$.  Since $A^n\cap\tilde{A}^n\ne\emptyset$, $B^n\cap\tilde{B}^n\ne\emptyset$ and therefore
$$
  \abs{b-\tilde{b}} \le \diam(B^n) + \diam(\tilde{B}^n ) \to 0 \quad\text{as $n\to\infty$} \,.
$$
Thus $\tilde{b}=b$ and so there is a well defined map $\vphi:\mfA\to\mfB$ given by setting $\vphi(a):=b$.

Two distinct points $a_1,a_2\in\mfA$ lie in disjoint arcs $A_1,A_2\in\mcA^n$, for sufficiently large $n\in\mfN$, and then $\vphi(A_1)\cap\vphi(A_2)=\emptyset$, so $\vphi(a_1)\ne\vphi(a_2)$ verifying that $\vphi$ is injective.

Given $b\in\mfB$ and a descendant sequence $(B^n)_1^\infty$ with $\{b\}=\bigcap_1^\infty B^n$,  $A^n:=(\Phi^n)^{-1}(B^n)$ defines a descendant sequence $(A^n)_1^\infty$ with, say, $\{a\}:=\bigcap_1^\infty A^n$, and then $\vphi(a)=b$.  Thus $\vphi$ is surjective.


Let $\veps>0$ be arbitrary.  Fix an $n\in\mfN$ such that $\max\{\diam(B) \mid B\in \mcB^n\}< \veps/2$.  Let $\delta:=\min\{\dist(A_1,A_2) \mid A_1,A_2\in\mcA^n \,;\; A_1\cap A_2=\emptyset\}$.  Suppose $a_1,a_2\in\mcA$ with $|a_1-a_2|<\del$.  Pick $A_k\in\mcA^n$ with $a_k\in\A_k$.  The definition of $\del$ ensures that $A_1\cap A_2\ne\emptyset$.  Therefore,
$$
  |\vphi(a_1)-\vphi(a_2)|\le \diam(\vphi(A_1)) + \diam(\vphi(A_2)) \le \veps
$$
and so $\varphi$ is (uniformly) continuous and hence a \homeo.
\end{proof}%
\section{Dyadic Subarcs and Diameter Functions}  \label{S:Dyadics} 

Here we give precise definitions of our model curves, i.e., our model circles.  These are given by defining metrics on $\mfS^1$.  Since we can restrict attention to 1-\bt\ circles (thanks to \rf{L:BT iff BL}(b,c)), it suffices to only know the diameters of certain subarcs, provided we have a sufficiently plentiful collection of subarcs; for this purpose we use the dyadic subarcs described in \rf{s:DS}.  We introduce the notion of a dyadic diameter function in \rf{s:DDF}; these provide a simple method for constructing metrics on $\mfS^1$.  Then in \rf{s:BL equiv} we establish a convenient way to detect when two such metrics are \bl\ equivalent, and also when a given metric Jordan curve is \bl\ equivalent to $\mfS^1$ with such a metric.

\subsection{Dyadic Subarcs}  \label{s:DS} 
With our convention that $\mfS^1=[0,1]/\{0\!\!\sim\!\!1\}$, the $n^{\rm th}$-generation dyadic subarcs of $\mfS^1$ (obtained by dividing $\mfS^1$ into $2^n$ subarcs of equal diameter) are the subarcs of the form
\begin{gather*}
  I_k^n:=[k/2^n,(k+1)/2^n] \quad\text{where $k\in\{0,1,\dots,2^n-1\}$}. \\
  \intertext{Noting that $I^0:=I_0^0:=\mfS^1$, we define}
  \mcI^{n}:=\{I_k^{n} \mid k\in\{0,1,\dots,2^n-1\} \} \quad \text{and then} \quad  \mcI:=\bigcup_{n=0}^\infty  \mcI^{n} .
\end{gather*}
Each dyadic subarc $I^n\in \mcI^n$ contains exactly two
$I^{n+1},\tilde{I}^{n+1}\in \mcI^{n+1}$ that we call the \emph{children} of $I^n$, and then $I^n$ is the
\emph{parent} of each of $I^{n+1}, \tilde{I}^{n+1}$.

It is convenient to introduce some terminology. Often, we denote the \emph{children} or \emph{sibling} or \emph{parent} of a generic $I\in\mcI$ by
$$
  I_0 \,, I_1 \;\text{ or }\; \tilde{I} \;\text{ or }\; \hat{I}
$$
respectively; implicit in the use of the latter two notations is the requirement that $I\ne\mfS^1$.

Clearly, $(\mcI^n)_1^\infty$ is a shrinking subdivision for $\mfS^1$ in the sense of \rf{s:shrinking}.
Recall too that a sequence $(I^{n})_{n=0}^\infty$ of dyadic subarcs $I^n\in\mcI^n$ is a \emph{descendant sequence} provided $I^{0}\supset I^{1}\supset I^{2}\supset\dots$; that is, for each $n$, $I^{n+1}$ is a child of $I^{n}$.  We note that for each $x\in\mfS^1$ there is a descendant sequence $(I_x^{n})_{n=0}^\infty$ with $\{x\}=\bigcap_{n=0}^\infty I_x^{n}$; such a sequence is unique unless $x$ is a dyadic endpoint in which case there are exactly two such sequences.

By connecting each arc to its parent, we can view $\mcI$ as the vertex set of a rooted binary tree.  In this connection, we use the following elementary fact on various subtrees.

\theoremstyle{plain}
\newtheorem*{KsL}{K\H{o}nig's Lemma}
\begin{KsL}
  A rooted tree with infinitely many vertices, each of finite degree, contains an infinite simple path.
\end{KsL}

In our setting this means that each infinite subtree contains a descendant sequence.

\medskip

In the proof of part (B) of our Theorem it will be convenient to ``do $m$ steps at once''.  This means that instead of dividing an arc into two subarcs, we will divide it into $2^m$ subarcs.  With this in mind, we also consider the family $\mcJ$ of all $2^m$-adic subarcs; thus
$$
  \mcJ := \bigcup_{n=0}^\infty  \mcJ^{n} \quad\text{where $\mcJ^{n} = \mcI^{mn}$}.
$$
Each $\mcJ^{n}$ contains the $2^{mn}$ subarcs of the form
$J_k^n:=[k/2^{mn}, (k+1)/2^{mn}]$ in $\mcI^{mn}$ with $k\in\{0,1,\dots,2^{mn}-1\}$.  Each such arc $J^n$ has
$2^m$ children, i.e., arcs $J^{n+1}\in \mcI^{m(n+1)}$, all of which are contained in $J^n$.

\subsection{Dyadic Diameter Functions}  \label{s:DDF} 
A dyadic diameter function $\Del$ assigns a diameter $\Delta(I)$ to each dyadic subarc $I\in \mcI$. More precisely, we call $\Del:\mcI\to(0,1]$  a \emph{dyadic diameter function constructed using the snowflake parameter $\sig\in[1/2,1]$} provided $\Del(\mfS^1)=1$ and
\begin{gather*}
  \forall\; I\in\mcI \;, \quad\text{either}\quad \Del(I_0)=\Del(I_1):=\half\,\Del(I) \quad\text{or}\quad \Del(I_0)=\Del(I_1):=\sig\,\Del(I)  \\
  \intertext{where $I_0, I_1$ are the two children of $I$.  When $\sig=1$, we also require}
  \lim_{n\to\infty} \max \left\{ \Del(I) \mid I\in\mcI^n \right\} = 0 \,.
\end{gather*}
If $\sig<1$, this latter condition is automatically true.  The snowflake parameter $\sig$ is kept fixed throughout the construction.


\showinfo{
By taking $\sig=1/2$ we recover the (normalized) Euclidean arc-length metric $\lam$.  With $\sig=1$ we obtain a \emph{simple} dyadic diameter function  that satisfies
\begin{equation}\label{E:simple}
  \forall\; I\in\mcI \,, \quad \Del(I)/\Del(I_0)=\Del(I)/\Del(I_1) \in \{1,2\}.
\end{equation}
}


Each dyadic diameter function $\Del$ produces a distance function $d=d_\Del$ on $\mfS^1$ defined by
\begin{equation}  \label{E:dist}
      d(x,y)=d_\Del(x,y):=\inf  \sum_{k=1}^N \Del(I_k)
\end{equation}
where the infimum is taken over all \emph{$xy$-chains} $I_1,\dots,I_N$ in $\mcI$; thus $x$ and $y$ lie in $I_1\cup\dots\cup I_N$, each $I_k$ belongs to $\mcI$, and for all $2\le k \le N$, $I_{k-1}\cap I_k\ne\emptyset$.
%
%

Now we present various properties of this metric.  Our `diameter function' terminology is motivated by item (d) below.

\begin{lma} \label{L:dist} %
Let $\mcI\overset{\Del}\to(0,\infty)$ be a dyadic diameter function and define $d:=d_\Del$ as in \eqref{E:dist}.  Then:
\begin{enumerate}
  \item[\rm(a)]  $d$ is a metric on $\mfS^1$.  
  \item[\rm(b)]  The identity map $\id:(\mfS^1,d)\to(\mfS^1,\lam)$ is a 1-Lipschitz \homeo; recall that $\lam$ is the normalized length metric on $\mfS^1$; see \rf{s:basic info}.
  \item[\rm(c)]  $(\mfS^1,d)$ is 1-\bt\ (so $d$ is its own diameter distance).
  \item[\rm(d)]  The diameter (with respect to $d$) of each dyadic subarc is given by $\Delta$; i.e., for  all $n\in\mfN$ and all $I\in\mcI^{n}$, $\diam_d(I)=\Del(I)$.
  \item[\rm(e)]  If $\Del$ is constructed using a snowflake parameter $\sig\in[1/2,1)$, then the Assouad dimension of $(\mfS^1,d)$ is at most $\log 2/\log(1/\sig)$.  Equality holds for the ``extremal model'' where we take $\Del(I_0) = \Del(I_1) = \sig\, \Delta(I)$ for both children $I_0,I_1$ of each $I\in\mcI$.
\end{enumerate}
\end{lma}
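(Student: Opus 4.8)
The plan is to establish (a)--(e) essentially in order, each part feeding the next, with (e) carrying the real weight.

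\emph{Parts (a) and (b).} The key preliminary fact is $\diam_\lam(I)\le\Del(I)$ for every $I\in\mcI$: a generation-$n$ dyadic arc has $\lam$-diameter $2^{-n}$ (or $1/2$ when $n=0$), while $\Del(I)\ge 2^{-n}$ because each child's $\Del$-value is at least half its parent's. Hence for any $xy$-chain $I_1,\dots,I_N$ the union $\bigcup_k I_k$ is connected, so $\lam(x,y)\le\diam_\lam\bigl(\bigcup_k I_k\bigr)\le\sum_k\diam_\lam(I_k)\le\sum_k\Del(I_k)$, and taking the infimum gives $\lam\le d$. Symmetry of $d$ is clear; the triangle inequality follows by first trimming each chain so its first and last arcs contain the relevant endpoints and then concatenating; and $d(x,x)\le\Del(I)\to 0$ along a descendant sequence through $x$ (the shrinking requirement is built into the definition of a dyadic diameter function). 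This proves (a) and shows $\id\colon(\mfS^1,d)\to(\mfS^1,\lam)$ is $1$-Lipschitz. For continuity of the inverse, given $x$ and $\veps>0$ choose $n$ with $\max\{\Del(I):I\in\mcI^n\}<\veps$; if $I,I'\in\mcI^n$ are the one or two generation-$n$ arcs containing $x$, then $\Int_\lam(I\cup I')$ is a $\lam$-neighborhood of $x$ on which $d(x,\cdot)<\veps$ (use the one-arc chain $\{I\}$ or $\{I'\}$). Hence $\id$ is a homeomorphism, which is (b).

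\emph{A covering estimate, and parts (c) and (d).} The combinatorial core is: \emph{if dyadic arcs $K_1,\dots,K_m$ cover a dyadic arc $J$, then $\sum_\ell\Del(K_\ell)\ge\Del(J)$.} Indeed, if some $K_\ell\supseteq J$ this is monotonicity of $\Del$ down the tree; otherwise every $K_\ell$ meeting $\Int(J)$ lies inside $J$, and after discarding the useless and the non-maximal $K_\ell$ one is left with a dyadic partition of $J$, for which the inequality follows by repeatedly replacing a pair of leaf-siblings by their parent (valid since $\Del(\hat K)\le\Del(K)+\Del(\tilde K)$, as $2\sig\ge 1$). For (d): fix $I\in\mcI^n$ with $n\ge 1$ and endpoints $a,b$; the two subarcs of $\mfS^1$ joining $a,b$ are $I$ and $I^c:=\mfS^1\setminus\Int(I)$. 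Any $ab$-chain has connected union, hence contains $I$ or $I^c$, so its arcs cover $I$ (giving $\Del$-sum $\ge\Del(I)$) or cover $I^c$; in the latter case, since $I$ lies in one of the two half-arcs $I^1_0,I^1_1$ and $I^c$ contains the other, the $\Del$-sum is $\ge\Del(\text{that half})=\Del(\text{the half containing }I)\ge\Del(I)$ (equal values on siblings, then monotonicity). So $d(a,b)\ge\Del(I)$, while $d(x,y)\le\Del(I)$ for $x,y\in I$ via the chain $\{I\}$; thus $\diam_d(I)=\Del(I)$. For (c): the very sequence that is an $xy$-chain is also a $pq$-chain for every $p,q$ in its union $U$, so $\diam_d(U)\le\sum_k\Del(I_k)$; given distinct $x,y$, let $J_1,J_2$ be the two closed subarcs joining them with $\diam_d J_1\le\diam_d J_2$, so $\mfS^1[x,y]=J_1$; the closed connected set $U$ contains $x,y$, hence contains $J_1$ or $J_2$, and either way $\diam_d J_1\le\diam_d U\le\sum_k\Del(I_k)$; the infimum gives $\diam_d(\mfS^1[x,y])\le d(x,y)$, i.e.\ $(\mfS^1,d)$ is $1$-\bt, and then $d$ is its own diameter distance by \rf{L:BT iff BL}.

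\emph{Part (e), the main obstacle.} Put $\alf_0:=\log 2/\log(1/\sig)\ (\ge 1)$. Since $\Del$ shrinks by a factor in $[1/2,\sig]$ at each step, ``generation'' is the wrong scale; instead, for $\rho>0$ let $\mcI(\rho)$ be the partition of $\mfS^1$ into the \emph{maximal} dyadic arcs with $\Del\le\rho$. Each member has $\Del\in(\rho/2,\rho]$, hence $d$-diameter $\le\rho$ by (d); also $\mcI(\rho)$ refines $\mcI(\rho')$ when $\rho\le\rho'$; and two estimates hold. (i) Every $J\in\mcI(\rho')$ has at most $2(\rho'/\rho)^{\alf_0}$ descendants in $\mcI(\rho)$, because below $J$ one stops once $\Del\le\rho$ and each step multiplies $\Del$ by at most $\sig$, so the resulting subtree has depth $\le 1+\log(\rho'/\rho)/\log(1/\sig)$ and hence at most $2(\rho'/\rho)^{\alf_0}$ leaves. (ii) Any set $A$ with $\diam_d A=D$ lies in the union of at most four members of $\mcI(3D)$: for $a\in A$ and $t\in A$, a chain from $a$ to $t$ of $\Del$-sum $<\tfrac32 D$ exists (as $d(a,t)\le D$), and by the covering estimate its connected union contains no whole member of $\mcI(3D)$ (such a member has $\Del>\tfrac32 D$), hence overlaps at most two adjacent members, so $t$ lies within one step of a member containing $a$. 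Combining: given $\veps\in(0,1)$, cover each of the $\le 4$ members $M\in\mcI(3D)$ by its $\le 2(3/\veps)^{\alf_0}$ descendants in $\mcI(\veps D)$, each of $d$-diameter $\le\veps D$; this covers $A$ by $\le 8\cdot 3^{\alf_0}\veps^{-\alf_0}$ sets of diameter $\le\veps D$, so $\dimA(\mfS^1,d)\le\alf_0$. For the extremal model ($\Del(I_0)=\Del(I_1)=\sig\Del(I)$ always, so $\Del\equiv\sig^n$ on $\mcI^n$), take $S_n$ to consist of the midpoints of the generation-$n$ arcs, so $\card S_n=2^n$; the covering estimate shows adjacent midpoints are at $d$-distance $2\sig^{n+1}$ and that $\diam_d(S_n)\ge\sig^2$, so each $S_n$ is $2\sig^{n+1}$-separated; since $2\sig^\beta>1$ precisely when $\beta<\alf_0$, the defining inequality for Assouad dimension fails for every $\beta<\alf_0$, forcing $\dimA(\mfS^1,d)=\alf_0$. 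I expect the fussiest point to be the bookkeeping in (ii) — dyadic boundary points and the handful of small cases where $\mcI(3D)$ has only one, two, or three members — but these need no new idea.
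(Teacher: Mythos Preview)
Your argument is correct. For (a)--(d) you follow the paper's route, but you make explicit the covering estimate $\sum_\ell\Del(K_\ell)\ge\Del(J)$, which the paper's one-line proof of (d) (``any chain joining the endpoints of $I$ must cover either $I$ or its sibling $\tilde I$'') invokes implicitly but does not spell out; your leaf-sibling merging argument is a clean way to supply this.

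For (e) you take a genuinely different and more careful path. The paper simply picks $n$ with $\sig^n<\veps\le\sig^{n-1}$ and observes that every $\veps$-separated set meets each $I\in\mcI^n$ at most once, giving $\card\le 2^n\le 2\veps^{-\alf_0}$. As written this bounds only \emph{globally} separated sets---formally the upper Minkowski condition rather than the Assouad one---and the paper leaves the localization to small-diameter sets unspoken. Your stopping-time partitions $\mcI(\rho)$ carry out precisely this localization; estimate (i) is the scaled version of the paper's count, and (ii) supplies the missing ``any set of diameter $D$ sits in boundedly many pieces of scale $\approx D$'' step. What your approach buys is an honest Assouad bound; what the paper's buys is brevity, and it could be made rigorous with far less machinery by noting that each dyadic arc, with the restricted metric, is itself a $d_\Del$-circle up to scaling, so the same generation count applies inside it. For the extremal lower bound the paper uses the generation-$n$ \emph{endpoints} $\{k/2^n\}$ rather than midpoints: by (d) and 1-bounded turning their pairwise distances are exactly $\Del$-values, so the separation is precisely $\sig^n$ with no extra computation. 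Your ``adjacent midpoints are at $d$-distance $2\sig^{n+1}$'' is not quite right (for $\sig>1/\sqrt2$ the single-arc chain through the common parent gives $d(p,q)\le\sig^{n-1}<2\sig^{n+1}$), but the covering estimate still yields $d(p,q)\ge\sig^{n+1}$, which is all you need.
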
 
\noindent
\begin{proof}%
(a) It is clear that $d$ is non-negative, symmetric, and satisfies the triangle inequality.  Given $x\in \mfS^1$ and $n\in \mfN$, let $I^n_x\in \mcI^n$ be a dyadic subarc containing $x$.  Since $(I^n_x)_1^\infty$ is an $xx$-chain, $d(x,x)\leq \Delta(I^n_x) \to 0$ (as $n\to \infty$), so $d(x,x)=0$. Since $\Delta(I^n)\geq 2^{-n}=\diam_\lam(I^n)$, it follows that $d(x,y)\geq \lambda(x,y)$. Thus $d(x,y)= 0$ \ifff $x=y$.

\smallskip\noindent(b)
This follows from \rf{P:subdiv_homeo} and the penultimate sentence in the proof of (a).

\smallskip\noindent(d)
Fix $I\in\mcI^{n}$ with $n\geq 1$.  For all points $x,y\in
I$, $I$ is an $xy$-chain, so $d(x,y)\le \Delta(I)$  and thus $\diam_d(I)\le
\Delta(I)$.  The opposite inequality follows from the observation that any
chain joining the endpoints of $I$ must cover either $I$ or its sibling $\tilde{I}$.

\smallskip\noindent(c)
To demonstrate that $(\mfS^1,d)$ is $1$-\bt, fix distinct points
$x,y\in\mfS^1$.  Let $[x,y]$ and $[y,x]$ be the two closed
arcs on $\mfS^1$ between $x,y$ (i.e., the closures of
the components of $\mfS^1\sm\{x,y\}$). Assume that $\diam_d([x,y])
\leq \diam_d([y,x])$.  Next let $I_1,\dots,I_N$ be
any $xy$-chain.  Then $I_1\cup\dots\cup I_N\supset A$, where either
$A=[x,y]$ or $A=[y,x]$, so $\diam_d([x,y])\leq \diam_d(A)$.

For any $a,b\in A$, $I_1,\dots, I_N$ is an $ab$-chain; therefore
$$
  d(a,b) \le \sum_{n=1}^N \Delta(I_n) \,,\;\text{ and thus }\; \diam_d([x,y]) \le \diam_d(A) \le \sum_{n=1}^N \Delta(I_n) \,.
$$
Taking the infimum over all such $xy$-chains $I_1,\dots,I_N$ yields $$\diam_d([x,y])\le d(x,y)\,.$$

\noindent(e)
First, suppose $\Del$ is constructed using a snowflake parameter $\sig\in[1/2,1)$.  Let $\alf:=\log 2/\log(1/\sig)$, so $\sig^{-\alf}=2$.  Fix an arbitrary $\veps\in(0,1]$.  Choose $n\in \mfN$ so that $\sig^n < \veps \leq \sig^{n-1}$. Consider a dyadic subarc $I^n\in\mcI^n$. Then
$\diam_d(I^n) =\Delta(I^n) \leq \sig^n< \veps$.

Now let $A$ be any $\veps$-separated set in $(\mfS^1,d)$. Then $A$ contains at most one point in each dyadic subarc $I^n\in \mcI^n$. Thus
$$
  \card(A) \leq 2^n = \lp \sig^{-\alf} \rp^n = \sig^{-\alpha} \lp \sig^{n-1} \rp^{-\alpha} \le 2\, \veps^{-\alf} \,.
$$
It follows that the Assouad dimension of $(\mfS^1,d)$ is at most $\alpha$; see \rf{s:assouad-dimension}.

\smallskip
Finally, consider the dyadic diameter function given by setting $\Delta(I^{n+1}) :=\sig\, \Delta(I^n)$ (for each child $I^{n+1}\in \mcI^{n+1}$ of every $I^n\in \mcI^{n}$) and its corresponding metric  $d=d_\Delta$.  Then for each $n\in\mfN$, the set $A^n:=\{k/2^n \mid 0\le k < 2^n\}$ of $n^{\rm th}$-generation endpoints is $\sig^n$-separated in $(\mfS^1,d)$. Assume constants $C>0, \alpha>0$ are given so that the number of $\veps$-separated points is at most $C\veps^{-\alpha}$.  Taking $\veps:=\sig^n$ we obtain
\begin{equation*}
  C\veps^{-\alpha} = C (\sig^n)^{-\alpha} = C (\sig^{-\alpha})^n \geq \card(A^n) = 2^n \,,\;\text{ so }\; \alpha\geq \frac{\log 2}{\log(1/\sig)}\,.  \qedhere
\end{equation*}
\end{proof}%

\showinfo{The following is useful for checking the requirement that the diameters tend to zero.

\begin{lma} \label{L:Itozero} %
Suppose the function $\mcI\overset{\Delta}\to(0,\infty)$ satisfies $\Del(\hat{I})\ge\Del(I)$ for each $I\in\mcI\sm\{\mfS^1\}$.  Then \tfae:
\begin{enumerate}
  \item[\rm(a)]  $\lim_{n\to\infty} \max \{\Delta(I^{n}) \mid I^n \in \mcI^n\}= 0$.
  \item[\rm(b)]  For every sequence $(I^{n})_0^\infty$ (with $I^{n}\in\mcI^{n}$),
                 $\lim_{n\to\infty} \Delta(I^{n}) = 0$.
  \item[\rm(c)]  For every descendant sequence
                 $(I^{n})_0^\infty$, $\lim_{n\to\infty} \Delta(I^{n}) = 0$.
\end{enumerate}
\end{lma}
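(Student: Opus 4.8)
The plan is to establish the cycle of implications (a) $\Rightarrow$ (b) $\Rightarrow$ (c) $\Rightarrow$ (a). The first two are immediate and use neither K\H onig's Lemma nor the monotonicity hypothesis $\Del(\hat I)\ge\Del(I)$: any sequence $(I^n)_0^\infty$ with $I^n\in\mcI^n$ satisfies $\Del(I^n)\le\max\{\Del(J)\mid J\in\mcI^n\}$, which gives (a) $\Rightarrow$ (b); and a descendant sequence is in particular such a sequence, which gives (b) $\Rightarrow$ (c).

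The content lies in (c) $\Rightarrow$ (a), which I would prove by contraposition. Assume (a) fails and set $M_n:=\max\{\Del(I)\mid I\in\mcI^n\}$; then there is an $\veps_0\in(0,1]$ for which $\{n\mid M_n\ge\veps_0\}$ is infinite. Consider the super-level set $T:=\{I\in\mcI\mid\Del(I)\ge\veps_0\}$. The one idea in the proof is that the hypothesis $\Del(\hat I)\ge\Del(I)$ makes $T$ closed under passing to parents: if $I\in T$ and $I\ne\mfS^1$, then $\Del(\hat I)\ge\Del(I)\ge\veps_0$, so $\hat I\in T$. Since $\Del(\mfS^1)=1\ge\veps_0$, the root lies in $T$, so $T$, together with the parent--child edges, is a rooted subtree of the binary tree $\mcI$.

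Next I would check that $T$ is infinite. For each of the infinitely many $n$ with $M_n\ge\veps_0$ there is some $I\in\mcI^n$ with $\Del(I)\ge\veps_0$; all ancestors of $I$ lie in $T$, so $T\cap\mcI^m\ne\emptyset$ for every $m\le n$. As such $n$ are arbitrarily large, $T\cap\mcI^m\ne\emptyset$ for all $m$, so $T$ is infinite. Every vertex of $T$ has finite degree, so K\H onig's Lemma (in the form noted immediately after its statement, that each infinite subtree contains a descendant sequence) produces a descendant sequence $(I^n)_0^\infty$ lying in $T$. Then $\Del(I^n)\ge\veps_0$ for all $n$, so $\Del(I^n)\not\to0$, contradicting (c). This closes the cycle.

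I do not anticipate a genuine obstacle; the only thing to get right is the observation that monotonicity turns a super-level set of $\Del$ into a rooted subtree, after which K\H onig's Lemma finishes the argument. A minor point of care is to take $\veps_0\le1$ so that $\mfS^1\in T$ and $T$ really is rooted at $\mfS^1$.
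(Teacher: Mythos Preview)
Your proof is correct and follows essentially the same approach as the paper: the easy implications are dispatched immediately, and for (c)$\Rightarrow$(a) you negate (a), form the super-level set $T=\mcI_\veps$, use the monotonicity hypothesis to see that $T$ is closed under passing to parents (hence a rooted subtree), and then apply K\H{o}nig's Lemma to extract a descendant sequence with $\Del(I^n)\ge\veps_0$. One cosmetic point: the lemma as stated does not assume $\Del(\mfS^1)=1$, so your remark ``$\Del(\mfS^1)=1\ge\veps_0$'' is unjustified; but this is harmless, since your own infiniteness argument (ancestors of any $I\in T$ lie in $T$) already places $\mfS^1$ in $T$.
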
 
\begin{proof}%
  The implications (a)$\iff$(b)$\implies$(c) are clear.
  Assume (a) is false. Then there is an $\veps>0$ such that the set
  $\mcI_\veps:= \{I^n\in \mcI \mid \Delta(I^n)\geq \veps\}$ is
  infinite. Note that if $I^n$ is in $\mcI_\veps$, then its parent
  is contained in $\mcI_\veps$ as well. Thus $\mcI_\veps$ naturally forms
  a graph, where we connect each $I^n\in \mcI_\veps$ to
  its parent. This is an infinite tree. By K\H{o}nig's Lemma
  there is a descendant
  sequence $I^0\supset I^1\supset \dots$ with $\Delta(I^n) \geq
  \veps$ for all $n\in \mfN$. Thus (c) is false, so (c)$\implies$(a).
\end{proof}%
}

\medskip

Given $\sig\in[1/2,1]$, we let $\mcS_\sig$ be the collection of all metric circles $(\mfS^1,d)$, where the metric $d=d_\Del$ is defined as in \eqref{E:dist} and $\Del:\mcI\to(0,1]$ is any dyadic diameter function constructed using the snowflake parameter $\sig$.  Then
$$
  \mcS:=\bigcup_{\sig\in[1/2,1]} \mcS_\sig
$$
is our catalog of snowflake type metric circles.
Thanks to the Tukia-\Va\ characterization, \rf{L:dist}(c,e) imply that for $\sig\in[1/2,1)$, each curve in $\mcS_\sig$ is a metric quasicircle.

The curves in $\mcS_1$ are \bt\ circles, but need not be metric quasicircles since they may fail to be doubling. There is a simple test for doubling that we give below in Lemma~\ref{L:dblg}.

\subsection{$2^m$-adic Diameter Functions}  \label{s:2DF} 
We also require $2^m$-adic diameter functions; recall (see the end of \rf{s:DS}) that $\mcJ$ denotes the family of $2^m$-adic subarcs of $\mfS^1$.  We call $\Del:\mcJ\to(0,1]$  a \emph{$2^m$-adic diameter function constructed using the snowflake parameter $\tau\in[1/2^m,1]$} provided $\Del(\mfS^1)=1$ and
\begin{align*}
  \forall\; J\in\mcJ \;, \quad\text{either}\quad & \Del(J_0)=\Del(J_1)=\dots=\Del(J_{2^m-1}):=\frac{1}{2^m}\,\Del(J)  \\
                         \quad\text{or}\quad & \Del(J_0)=\Del(J_1)=\dots=\Del(J_{2^m-1}):=\tau\,\Del(J)
\end{align*}
where $J_0,\dots,J_{2^m-1}$ are the children of $J$. The
snowflake parameter $\tau$ is fixed throughout the construction. If
$\tau=1$, we also require
$$
  \lim_{n\to\infty} \max \left\{ \Del(J) \mid J\in\mcJ^n \right\} = 0 \,.
$$
When $\tau<1$ this latter condition is automatically true.

\smallskip
Just as for dyadic diameter functions, each $2^m$-adic diameter function $\Del$ has an associated distance function $d_\Del$ defined as in \eqref{E:dist} but now we only consider $xy$-chains chosen from $\mcJ$.  \rf{L:dist} remains valid for $2^m$-adic diameter functions; however, in part (e) we must take $\sig=\tau^{1/m}$, where the $2^m$-adic diameter function is constructed using the snowflake parameter $\tau\in[1/2^m,1]$.

\medskip
We note the following
useful fact.   For each dyadic arc $I\in\mcI$, there exist $2^m$-adic
arcs $J^n\in\mcJ^n$ and $J^{n+1}\in\mcJ^{n+1}$ such that
\begin{equation}  \label{E:I vs J}
  J^{n+1} \subset I\subset J^n \,.
\end{equation}

Each $2^m$-adic diameter function $\Del:\mcJ\to(0,1]$, with snowflake parameter $\tau$, has a natural extension to a dyadic diameter function $\Del:\mcI\to(0,1]$, with snowflake parameter $\sigma:=\tau^{1/m}$, that is defined as follows.  Fix a subarc $J^n\in\mcJ$ and let $J^{n+1}\subset J^n$ be any child of $J^n$.  Let $J^n=:I^{mn}\supset I^{mn+1}\supset \dots \supset I^{m(n+1)}:=J^{n+1}$ be the finite descendant sequence from $\mcI$ determined by $J^{n+1}$ and $J^n$.  Set
\begin{gather*}
  \rho:=[\Del(J^{n+1})/\Del(J^n)]^{1/m} \quad\text{(so, $\rho\in\{1/2,\tau^{1/m}\}$)} \\
  \intertext{and for each $i\in\{0,1,\dots,m\}$ define}
  \Del(I^{mn+i}) := \rho^i \Del(J^n) \,.
\end{gather*}
In view of \eqref{E:I vs J}, this procedure defines $\Del(I)$ for each $I\in\mcI$. Note that $\Delta(I^{mn+0})= \Delta(J^n)$ and $\Delta(I^{mn+m})= \Delta(J^{n+1})$, so $\Del:\mcI\to(0,1]$ is an extension of $\Delta\colon \mcJ\to (0,1]$. Clearly this extension is a dyadic diameter function constructed with the snowflake parameter $\sig=\tau^{1/m}$.



\begin{lma} \label{L:DDF to 2^mDF} %
Let $\Delta\colon\mcJ\to(0,1]$ be a $2^m$-dyadic diameter function that has been extended to all dyadic intervals, i.e., to a dyadic diameter function $\Delta\colon \mcI\to (0,1]$, as described above. Let $d_\mcI$ and $d_\mcJ$ be the metrics defined via $\Delta|_{\mcI}$ and $\Delta|_{\mcJ}$ respectively, meaning by \eqref{E:dist} and using chains from $\mcI$ and $\mcJ$ respectively.
Then for all $x,y\in\mfS^1$,
$$
  \frac1{2^m} d_\mcJ(x,y) \le d_\mcI(x,y) \le d_\mcJ(x,y) \,.
$$
\end{lma}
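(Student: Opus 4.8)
The plan is to prove the two inequalities separately: the right-hand one $d_\mcI(x,y)\le d_\mcJ(x,y)$ is essentially immediate, while the left-hand one $\frac{1}{2^m}d_\mcJ(x,y)\le d_\mcI(x,y)$ requires ``coarsening'' an $\mcI$-chain to a $\mcJ$-chain at a bounded cost.

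For $d_\mcI(x,y)\le d_\mcJ(x,y)$: since $\mcJ^n=\mcI^{mn}$, we have $\mcJ\subseteq\mcI$, so every $xy$-chain drawn from $\mcJ$ is in particular an $xy$-chain drawn from $\mcI$. Thus the infimum in \eqref{E:dist} defining $d_\mcI(x,y)$ runs over a superset of the chains defining $d_\mcJ(x,y)$, and the inequality follows at once. Here I use that $\Delta|_\mcI$ genuinely extends $\Delta|_\mcJ$ (established in the construction preceding the lemma), so the summands agree on chains from $\mcJ$.

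For $\frac{1}{2^m}d_\mcJ(x,y)\le d_\mcI(x,y)$, equivalently $d_\mcJ(x,y)\le 2^m d_\mcI(x,y)$: I would fix an arbitrary $xy$-chain $I_1,\dots,I_N$ in $\mcI$ and replace each $I_k$ by $J_k$, the \emph{smallest} $2^m$-adic arc containing $I_k$, which exists by \eqref{E:I vs J} together with the nestedness of the $2^m$-adic arcs. First I would check that $J_1,\dots,J_N$ is an $xy$-chain in $\mcJ$: the union still contains $x$ and $y$ since $I_k\subseteq J_k$, and $J_{k-1}\cap J_k\supseteq I_{k-1}\cap I_k\ne\emptyset$ for $2\le k\le N$; hence $d_\mcJ(x,y)\le\sum_{k=1}^N\Delta(J_k)$. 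Next I would bound each term: writing $I_k\in\mcI^{mn+i}$ with $0\le i\le m-1$, the arc $J_k$ is the arc of $\mcJ^n=\mcI^{mn}$ lying $i$ dyadic generations above $I_k$, and the explicit extension formula preceding the lemma gives $\Delta(I_k)=\rho^{\,i}\Delta(J_k)$ where $\rho=[\Delta(J^{n+1})/\Delta(J^n)]^{1/m}\in\{1/2,\tau^{1/m}\}$. Since $\tau\in[1/2^m,1]$ forces $\tau^{1/m}\in[1/2,1]$, we get $\rho\ge 1/2$, whence $\Delta(J_k)=\rho^{-i}\Delta(I_k)\le 2^{\,i}\Delta(I_k)\le 2^m\Delta(I_k)$. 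Summing over $k$ yields $d_\mcJ(x,y)\le\sum_k\Delta(J_k)\le 2^m\sum_k\Delta(I_k)$, and taking the infimum over all $xy$-chains $I_1,\dots,I_N$ in $\mcI$ gives $d_\mcJ(x,y)\le 2^m d_\mcI(x,y)$, as desired.

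The whole argument is bookkeeping, and the only point that needs genuine (if small) care is the weight estimate $\Delta(J_k)\le 2^m\Delta(I_k)$: this is exactly where the constant $2^m$ enters, forced by the bound $\rho\ge 1/2$, which in turn is forced by the hypothesis $\tau\ge 1/2^m$ built into the definition of a $2^m$-adic diameter function. I do not anticipate any serious obstacle beyond verifying that $I_k\mapsto J_k$ carries $\mcI$-chains to $\mcJ$-chains while inflating the $\Delta$-weights by at most the factor $2^m$.
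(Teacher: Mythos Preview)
Your proof is correct and follows essentially the same approach as the paper: both arguments show $d_\mcI\le d_\mcJ$ from $\mcJ\subset\mcI$, and for the other inequality both replace each $I_k$ in an $\mcI$-chain by its $\mcJ$-parent $J_k$ (via \eqref{E:I vs J}) and bound $\Delta(J_k)\le 2^m\Delta(I_k)$. The only cosmetic difference is in that weight estimate: the paper interposes a $\mcJ$-child $J_k'\subset I_k\subset J_k$ and uses $\Delta(I_k)\ge\Delta(J_k')\ge 2^{-m}\Delta(J_k)$, whereas you invoke the explicit extension formula $\Delta(I_k)=\rho^{\,i}\Delta(J_k)$ with $\rho\ge 1/2$; both routes are equally valid.
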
 
\begin{proof}%
The right-hand inequality holds because there are more $xy$-chains available when we use subarcs from $\mcI$.  To prove the left-hand inequality, let $I_1,\dots,I_N$ be an $xy$-chain from $\mcI$.  Now use \eqref{E:I vs J} to get a corresponding $xy$-chain $J_1,\dots,J_N$ from $\mcJ$  and with $J_k'\subset I_k \subset J_k$ where $J_k'$ is some child of $J_k$.  Then for each $k$
$$
  \Del(I_k) \ge \Del(J'_k) \ge 2^{-m} \Del(J_k)\,, \quad\text{so}\quad
  d_\mcJ(x,y) \le \sum_{k=1}^N \Del(J_k) \le 2^m \sum_{k=1}^N \Del(I_k) \,.
$$
Taking an infimum gives $d_\mcJ(x,y)\le 2^m d_\mcI(x,y)$.
\end{proof}%

The previous lemma and prior discussion reveal that in order to prove that a given metric circle $(\Gam,\ed)$ is \bl\ equivalent to a curve in $\mcS_\sig$, it is sufficient to construct a $2^m$-adic model circle (with snowflake parameter $\tau= \sigma^m$) that is \bl\ equivalent to $(\Gam,\ed)$; this will yield a dyadic model circle (with snowflake parameter $\sigma$) \bl\ equivalent to $(\Gam,\ed)$.


\begin{rmk}\label{rmk:4m4adic}
  Rohde's construction is based on $4$-adic arcs rather than dyadic arcs.  Results similar to the above also hold in this case.  Namely each $4^m$-adic diameter function $\bigcup_k\mcI^{4^{mk}}\to(0,1]$, with snowflake parameter $\tau$ in $[1/4^m,1]$, has an extension to a $4$-adic diameter function with snowflake parameter $\sig:=\tau^{1/m}\in [1/4,1]$.  The analog of \rf{L:DDF to 2^mDF} holds: the metrics constructed from these two diameter functions are \bl\ equivalent.
\end{rmk}

\subsection{Bi-Lipschitz Equivalence}  \label{s:BL equiv} 

Let $(\Gam,\ed)$ be a \bt\ circle and $(\mfS^1,d_\Del)$ be a model circle where $\Del$ is some dyadic diameter function.  In the following we
show that to prove bi-Lipschitz equivalence of $(\Gam,\ed)$ and $(\mfS^1,d_\Del)$, it is enough to show bi-Lipschitz equivalence for dyadic
subarcs.  More precisely, we establish the following result.

\begin{lma} \label{L:d BL_to_d_D_variant2k} 
Let $(\Gam,\ed)$ be a $C$-\bt\ circle and $d=d_\Delta$ a metric on
$\mfS^1$ defined via a $2^m$-adic diameter function $\Delta$.  
Let $\varphi\colon \mfS^1 \to \Gamma$ be a homeomorphism.
Suppose there exists a constant $K\geq 1$ such that for all $J\in\mcJ$,
$$
  K^{-1}\diam(\varphi(J)) \le \Delta(J) \le K\, \diam(\varphi(J)) \,.
$$
Then $(\mfS^1,d)\overset{\vphi} \to (\Gamma,\ed)$ is $L$-bi-Lipschitz where $L:=2^{m+1}C\,K$.
\end{lma}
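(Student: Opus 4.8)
I would establish the two Lipschitz bounds separately.

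\medskip\noindent\emph{The estimate $|\vphi(x)-\vphi(y)|\le K\,d(x,y)$.}
This is the easy half and uses only the definition of $d=d_\Del$.  Let $J_1,\dots,J_N$ be any $xy$-chain in $\mcJ$.  The sets $\vphi(J_1),\dots,\vphi(J_N)$ are subcontinua of $\Gam$, consecutive ones intersect, and $\vphi(x),\vphi(y)\in\vphi(J_1)\cup\dots\cup\vphi(J_N)$; hence
$$
  |\vphi(x)-\vphi(y)|\;\le\;\diam\!\Big(\bigcup_{k=1}^N\vphi(J_k)\Big)\;\le\;\sum_{k=1}^N\diam\big(\vphi(J_k)\big)\;\le\;K\sum_{k=1}^N\Del(J_k).
$$
Taking the infimum over all such chains gives $|\vphi(x)-\vphi(y)|\le K\,d(x,y)\le L\,d(x,y)$.

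\medskip\noindent\emph{The estimate $d(x,y)\le 2^{m+1}CK\,|\vphi(x)-\vphi(y)|$.}
This is the real content.  Put $r:=|\vphi(x)-\vphi(y)|$.  Since $\{\mfS^1\}$ is always an $xy$-chain, $d(x,y)\le\Del(\mfS^1)=1\le K\diam(\vphi(\mfS^1))=K\diam(\Gam)$; so if $\diam(\Gam)\le 2^mC\,r$ we already have $d(x,y)\le 2^mCK\,r\le L\,r$.  Assume then $\diam(\Gam)>2^mC\,r$.  By the $C$-\bt\ property of $\Gam$ there is a subarc $\beta:=\Gam[\vphi(x),\vphi(y)]$ with $\diam(\beta)\le C\,r$; set $A:=\vphi^{-1}(\beta)$, an arc of $\mfS^1$ joining $x$ and $y$ with $\diam(\vphi(A))\le C\,r$.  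The plan is to produce an $xy$-chain in $\mcJ$ consisting of at most $2^m$ arcs, each of whose $\vphi$-image has diameter $\lesssim C\,r$, so that, by \rf{L:dist}(d), $d(x,y)\le\sum\Del(J_k)\le 2^m\cdot K\cdot O(Cr)=L\,r$.  I would organise the chain around $G$, the smallest $2^m$-adic arc containing both $x$ and $y$: if $\diam(\vphi(G))\le C\,r$ then $\{G\}$ itself works; otherwise $x$ and $y$ lie in distinct children of $G$, and one chains through the (at most $2^m$) consecutive children of $G$ joining the $x$-child to the $y$-child along $A$.  The structural fact that makes this natural is that any $J\in\mcJ$ with $J\cap A\ne\emptyset$ but $J\not\subseteq A$ must contain $x$ or $y$ --- otherwise $\vphi(J)$ would be a subarc of $\Gam$ meeting $\beta$, not contained in $\beta$, and not crossing an endpoint of $\beta$, which is impossible.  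Consequently the \emph{interior} children of $G$ appearing in the chain lie inside $A$, so their images lie in $\beta$ and have diameter $\le C\,r$, and only the two \emph{end} children (those containing $x$ and $y$) require a direct estimate.

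\medskip\noindent\emph{The main obstacle.}
It is exactly the control of the chain near $x$ and $y$.  When, say, $x$ is not a $2^m$-adic endpoint, the $2^m$-adic arcs containing $x$ form an infinite shrinking nest, and a naive covering argument would be forced to traverse all of them.  What rescues the estimate is that the hypothesis $\Del(J)\eqx_K\diam(\vphi(J))$ holds at \emph{every} scale together with the fact that, in a $2^m$-adic diameter function, all $2^m$ children of an arc receive the \emph{same} diameter; these force the $\vphi$-images of any two children of a common arc to have comparable diameters, which prevents $\vphi$ from folding a $2^m$-adic arc back near its own boundary and thereby keeps the image diameters of the two end children comparable to $r$.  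I would make this quantitative using \rf{L:dist}(c) --- so that $d(x,y)$ is the $d$-diameter of the $d$-smaller arc between $x$ and $y$ --- together with \rf{L:dist}(d), and dispose of the exceptional case in which $x$ or $y$ is a $2^m$-adic endpoint by a short separate argument.  In the final accounting the factor $2^{m+1}=2\cdot 2^m$ appears as $2^m$ (the number of consecutive children of $G$ between the $x$-child and the $y$-child) times an extra factor $2$ absorbed in the bounded-turning and diameter-versus-distance steps, while $C$ and $K$ enter through the $C$-\bt\ hypothesis on $\Gam$ and the comparability hypothesis on $\Del$, respectively.
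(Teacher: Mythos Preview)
Your first half is correct and in fact cleaner than the paper's: feeding an arbitrary $xy$-chain through the hypothesis gives $|\vphi(x)-\vphi(y)|\le K\,d(x,y)$ with no extra factors.  The paper instead uses \rf{L:arc lemma_2k_var} for this direction as well, which is unnecessary.

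The second half has a genuine gap.  Your plan is to take $G$ the smallest $2^m$-adic arc containing $x,y$ and chain through its children $G_i,\dots,G_j$ (with $x\in G_i$, $y\in G_j$), bounding the interior children via $\vphi(G_k)\subset\beta$ and the two end children via ``all children of $G$ have equal $\Del$, hence comparable $\vphi$-diameters''.  But that last step only shows the end children are comparable \emph{to each other}; it does not make either one comparable to $r$.  Concretely, take $m=1$: then $G$ has exactly two children $G_0,G_1$ with $x\in G_0$, $y\in G_1$, there are no interior children at all, and nothing bounds $\diam(\vphi(G_0))$ by a multiple of $r$.  Your ``no folding'' intuition is wrong here: if $p=\vphi(G_0\cap G_1)$ is the shared endpoint in $\Gam$, then $\vphi(x),\vphi(y)$ can both sit arbitrarily close to $p$ while $\vphi(G_0),\vphi(G_1)$ remain large.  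Recursing to grandchildren hits the same wall whenever $x,y$ straddle the common endpoint at every scale.  A second issue you do not address: there is no reason $A=\vphi^{-1}(\beta)$ should lie inside $G$, so even the interior-children step needs a case analysis.

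The paper sidesteps all of this by first proving a structural lemma (\rf{L:arc lemma_2k_var}): for \emph{any} arc $A\subset\mfS^1$ there is a $2^m$-adic $I\subset A$ with $\diam_d(A)\le 2^{m+1}\Del(I)$.  With that in hand the hard direction is one line: reduce to $1$-\bt\ via \rf{L:BT iff BL}(b,c), set $A:=\vphi^{-1}\bigl(\Gam[\vphi(x),\vphi(y)]\bigr)$, and compute
\begin{align*}
  d(x,y)&\le\diam_d(A)\le 2^{m+1}\Del(I)\le 2^{m+1}K\diam(\vphi(I))\\
        &\le 2^{m+1}K\diam(\vphi(A))=2^{m+1}K\,|\vphi(x)-\vphi(y)|\,.
\end{align*}
The key difference from your approach is that the lemma works with the largest $2^m$-adic arc \emph{inside} $A$, not the smallest one containing $x$ and $y$; since $I\subset A$ forces $\vphi(I)\subset\beta$ automatically, there is no endpoint problem to handle.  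Your appeal to \rf{L:dist}(c,d) is in the right spirit---those supply $d(x,y)\le\diam_d(A)$ and $\diam_d(I)=\Del(I)$---but the missing bridge is precisely the inequality $\diam_d(A)\le 2^{m+1}\Del(I)$, which is the content of \rf{L:arc lemma_2k_var}.
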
 

Before proving this lemma (see \ref{proof}), we first give a simple way to estimate the diameter of an arc in terms of the diameters of dyadic subarcs.

\begin{lma} \label{L:arc lemma_2k_var} %
Let $\mcJ\overset{\Del}\to(0,1]$ be a $2^m$-adic diameter function with associated metric $d=d_\Delta$.
For each arc $A\subset\mfS^1$, define
\begin{equation*}  \label{eq:def_delta}
  \D(A)=\D_\Del(A):= \max\{\Delta(I) \mid I\subset A, I \in \mcJ\}.
\end{equation*}
Then for all arcs $A\subset\mfS^1$,
\begin{equation*}
  \D(A)\leq \diam_d (A)\leq 2^{m+1}\D(A).
\end{equation*}
In fact, there are $2^m$-adic arcs $I,J\in \mcJ$ such that $I\cup J\subset A \subset \hat{I} \cup \hat{J}$, $\Delta(I)=\D(A)$, and either $I=J$ or $\hat{I},\hat{J}$ are adjacent.  Here $\hat{I},\hat{J}\in \mcJ$ are the parents of $I,J$ relative to $\mcJ$. 
\end{lma}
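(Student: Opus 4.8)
The plan is to prove the "In fact" assertion first, since it immediately yields both inequalities. So let $A\subset\mfS^1$ be an arc. First I would observe that $\mcD(A)$ is well defined: by \rf{L:finite} (applied in $(\mfS^1,d)$, noting $\diam_d(I)=\Del(I)$ by \rf{L:dist}(d)) there are only finitely many $2^m$-adic subarcs of $\mfS^1$ with diameter bounded below, and there is at least one $2^m$-adic arc contained in $A$ unless $A$ is so small it sits inside a single $2^m$-adic arc — and even then $A$ meets arcs whose diameters are positive, so the max over $\{I\subset A\}$ is attained (if $A$ contains no $2^m$-adic arc at all, one must handle that degenerate case separately, picking $I$ to be a smallest $2^m$-adic arc meeting $A$; I'll come back to this). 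Assuming $A$ contains at least one $2^m$-adic arc, let $I\in\mcJ$ be one with $\Del(I)=\mcD(A)$, chosen of the largest possible generation (equivalently, choose $\hat I$ as large as possible). Writing $A$ with a fixed orientation as $[a,b]$, I would then walk outward from $I$ toward the endpoint $a$ of $A$: let $\hat I$ be the parent of $I$ relative to $\mcJ$; either $\hat I$ still lies in $A$ — impossible by maximality of the generation unless $\Del(\hat I)=\mcD(A)$ too, in which case replace $I$ by $\hat I$ — or $\hat I\not\subset A$, so the endpoint of $A$ on that side lies in $\hat I$. Repeating on the other side produces a second $2^m$-adic arc $J$ (possibly $J=I$) with $\hat J\not\subset A$ but $\hat J\cap A\neq\emptyset$, so that $A\subset \hat I\cup\hat J$, and either $I=J$ or $\hat I,\hat J$ are adjacent (they must touch since $A$ is connected and $I\cup J\subset A$ lies between them). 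Also $I\cup J\subset A$ by construction. This establishes the structural claim with $\Del(I)=\mcD(A)$.

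Next I would extract the two inequalities. The lower bound $\mcD(A)\le\diam_d(A)$ is immediate: $I\subset A$ gives $\diam_d(A)\ge\diam_d(I)=\Del(I)=\mcD(A)$ by \rf{L:dist}(d). For the upper bound, use $A\subset\hat I\cup\hat J$, so $\diam_d(A)\le\diam_d(\hat I)+\diam_d(\hat J)=\Del(\hat I)+\Del(\hat J)$. Since $\hat I$ is the parent of $I$ relative to $\mcJ$, each of its $2^m$ children — in particular $I$ — has $\Del$-value either $\tfrac1{2^m}\Del(\hat I)$ or $\tau\,\Del(\hat I)\ge\tfrac1{2^m}\Del(\hat I)$, so $\Del(\hat I)\le 2^m\Del(I)=2^m\mcD(A)$, and likewise $\Del(\hat J)\le 2^m\Del(J)\le 2^m\mcD(A)$ because $J\subset A$ forces $\Del(J)\le\mcD(A)$. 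Hence $\diam_d(A)\le 2\cdot 2^m\mcD(A)=2^{m+1}\mcD(A)$, as claimed.

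The main obstacle is the degenerate case where $A$ is too small to contain any $2^m$-adic arc, together with the boundary bookkeeping when $\mcD(A)$ is achieved by several arcs (dyadic endpoints have two descendant sequences). For the degenerate case: any arc $A$ is covered by at most three consecutive $2^m$-adic arcs of any given generation $mn$ once $\diam_d$ of those arcs drops below $\diam_d(A)$ — actually two suffice if we pick the generation so that a single $2^m$-adic arc $I^*$ of generation $mn$ has $\diam_d(I^*)\le\diam_d(A)$ but some generation-$m(n-1)$ arc containing part of $A$ is not contained in $A$; then take $I=J$ to be the (smallest, highest-generation) $2^m$-adic arc with $\Del(I)=\mcD(A)$ and verify $A\subset\hat I\cup\hat J$ directly, possibly after relabeling so $\hat I,\hat J$ are the (at most two) generation-$(n-1)$ arcs meeting $A$. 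I expect this to require only a short case analysis — choosing $I$ of maximal generation among the $\mcD(A)$-maximizers is the key device that makes the "either $I=J$ or $\hat I,\hat J$ adjacent" dichotomy clean — but it is the part most prone to off-by-one slips, so I would write it out carefully rather than leave it to the reader.
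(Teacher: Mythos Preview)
Your overall strategy is the paper's: establish the structural claim about $I,J,\hat I,\hat J$ first, then read off both inequalities. Your derivation of the inequalities from the structural claim is fine, and matches the paper's computation. The problem is the structural claim itself, where your argument has a real gap.

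You choose $I$ correctly (a $\Del$-maximizer in $A$ of \emph{smallest} generation---you wrote ``largest'', but your replacement-by-parent step shows you mean smallest), and you correctly conclude $\hat I\not\subset A$. But then ``Repeating on the other side produces a second $2^m$-adic arc $J$'' is not a construction: you never say what $J$ is. And the justification ``$\hat I,\hat J$ must touch since $A$ is connected and $I\cup J\subset A$ lies between them'' is simply false as stated---connectedness of $A$ puts no adjacency constraint on the \emph{parents} of two subarcs of $A$. Likewise ``$A\subset\hat I\cup\hat J$'' is asserted, not proved. This is precisely the content of the lemma, and it needs an actual argument.

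The paper handles it as follows. After arranging that $\hat I$ covers $[a,y]$ (where $y$ is the right endpoint of $\hat I$), it chooses $J$ to be the \emph{maximal} $2^m$-adic arc having $y$ as its left endpoint and contained in $[y,b]$. The point is then to show that $\hat J$ reaches past $b$. If it didn't, $\hat J$ would have to contain a point to the left of $y$; then $y$ would be interior to $\hat J$, forcing $\hat I\subsetneq\hat J$ (two $2^m$-adic arcs with overlapping interiors are nested). But that makes $J$ of strictly earlier generation than $I$ \emph{and} a sibling $\tilde J$ of $J$ contains $\hat I$, giving $\Del(J)=\Del(\tilde J)\ge\Del(\hat I)\ge\Del(I)$---together contradicting the maximality used to pick $I$. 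This nesting/contradiction step is the heart of the proof, and it is exactly what your sketch skips.

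Two smaller points. First, there is no ``degenerate case'': every non-degenerate arc $A\subset\mfS^1$ contains $2^m$-adic arcs of all sufficiently high generations, so the maximum defining $\D(A)$ is over a nonempty set and is attained (only finitely many $I\in\mcJ$ have $\Del(I)$ above any fixed threshold). Your paragraph on this can be dropped. Second, the tie-breaking rule matters: among $\Del$-maximizers you want $I$ of \emph{smallest} generation (so that $\hat I\subset A$ is genuinely impossible), which is what the paper's ``seniority wins'' convention encodes.
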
 
\begin{proof}%
Let $A$ be a subarc of $\mfS^1$.
Suppose we have verified the existence of the described $2^m$-adic arcs $I,J\in \mcJ$.  Then
\begin{align*}
  \D(A) &=\Del(I)=\diam_d(I) \le \diam_d(A) \le \diam_d(\hat{I}\cup\hat{J})   \\
        &\le \diam_d(\hat{I})+\diam_d(\hat{J}) = \Del(\hat{I})+\Del(\hat{J})  \\
        &\le 2^m[\Del(I)+\Del(J)] \le 2^{m+1} \Del(I) = 2^{m+1} \D(A) \,.
\end{align*}
Thus it suffices to exhibit such $I$ and $J$.

\smallskip
Suppose $\mcF\subset\mcJ$ is some family of $2^m$-adic arcs (e.g.,
defined by certain properties).  We say that an arc $I^n\in\mcJ^n$ is
\emph{maximal \wrt $\mcF$} provided $I^n\in\mcF$ and for all $J^l\in
\mcJ^l$ with
$J^l\in\mcF$, either $\Del(J^l)<\Del(I^n)$ or
$$
  \Del(J^l)=\Del(I^n) \quad\text{and }\; l\ge n \,.
$$
Thus $I^n$ is the ``largest'' arc in $\mcF$, and when there are
several such large arcs, ``seniority wins''.  Note that the parent of
such a maximal $I^n$ will not belong to $\mcF$.

\smallskip
Now assume $A$ is the oriented arc $[a,b]\subset\mfS^1=[0,1]/\!\!\sim$
with $0<a<b<1$.  Pick $I=I^n\in \mcJ$ so that $I\subset A$,
$\Del(I)=\D(A)$, and such that $I$ is maximal among all such arcs.
Let $\hat{I}\supset I$ be the $\mcJ$-parent of $I$.   If $A\subset
\hat{I}$, then upon setting $J:=I$  we are done.

\smallskip
Assume  that $A\not\subset \hat{I}$.  The maximality of $I$
ensures that one endpoint of $\hat{I}$, without loss of generality the
left endpoint, is not contained in $A$. Let $y$ be the right endpoint
of $\hat{I}$.  Then $[a,y]\subset \hat{I}$.

Now consider subarcs $J\in\mcJ$ that lie in $A$ and to the right of
$y$, and select the largest of these.  More precisely, let
$J=J^l\in\mcJ$ be the maximal $2^m$-adic subarc that contains $y$ as
its left endpoint and is contained in $[y,b]$.  Note that the
maximality of $I$ implies that 
\begin{equation}\label{E:J vs I}
    \text{either}\quad l \ge n  \qquad\text{or}\quad \Del(J)<\Del(I) \,.
\end{equation}

Consider the parent $\hat{J}$ of $J$.  We claim that $\hat{J}$ contains a point to the right of $b$, and then since $A=[a,y]\cup[y,b]\subset\hat{I}\cup\hat{J}$, we are done.  If $\hat{J}$ did not contain a point to the right of $b$, then it would have to contain a point to the left of $y$, but as we now show this would lead to a contradiction.

So, suppose $\hat{J}$ contains a point to the left of $y$.  Then in particular, $y$ is an interior point of $\hat{J}$.  Since $y$ is an endpoint of $\hat{I}$, we cannot have $\hat{I}\supset\hat{J}$ nor $\hat{I}=\hat{J}$, and therefore $\hat{I}\subsetneq\hat{J}$.  This implies that $n>l$.  However, it also implies that some $2^m$-adic sibling $\tilde{J}$ of $J$ satisfies $\tilde{J}\supset\hat{I}$, and therefore $\Del(I)\le\Del(\hat{I})\le\Del(\tilde{J})=\Del(J)$.  In view of \eqref{E:J vs I}, one of these last two implications does not hold, so $\hat{J}$ cannot contain a point to the left of $y$.
\end{proof}%
\begin{pf}{Proof of \rf{L:d BL_to_d_D_variant2k}}  \label{proof}%
An appeal to \rf{L:BT iff BL}(b,c) permits us to assume that $(\Gam,\ed)$ is 1-\bt.  Write $\Gam[x,y]$ for the smaller diameter subarc joining points $x,y$ on $\Gam$; so $\abs{x-y}=\diam(\Gam[x,y])$.  Fix points $s,t$ on $\mfS^1$ and put $x:=\vphi(s), y:=\vphi(t)$.   Let $[s,t], [t,s]$ be the two arcs in $\mfS^1$ joining $s,t$ and assume that $\diam_d([t,s])\geq \diam_d([s,t])=d(s,t)$.

\smallskip
First we show that $\abs{x-y}\le2^{m+1}K\,d(s,t)$.  Using \rf{L:arc lemma_2k_var} we select $2^m$-adic subarcs $I,J\in \mcJ$ with $I\cup J\subset [s,t] \subset \hat{I} \cup \hat{J}$, $\hat{I}\cap\hat{J}\ne\emptyset$, and
$$
  \Del(J)\le \Delta(I)=\D([s,t])\le \diam_d([s,t])=d(s,t) \,.
$$
Here $\hat{I},\hat{J}\in \mcJ$ are the parents of $I,J$ relative to $\mcJ$.  Then
\begin{align*}
  \abs{x-y} &=\diam(\Gam[x,y]) = \min\{\diam(\vphi[s,t]),\diam(\vphi[t,s]) \} \le \diam(\vphi[s,t]) \\
            &\le \diam(\vphi(\hat{I} \cup \hat{J})) \le K[\Del(\hat{I}) + \Del( \hat{J} ) ] \le 2^m K [\Del({I}) + \Del({J} ) ] \\
            &\le 2^{m+1} K \, \Del(I) \le 2^{m+1} K \, d(s,t) \,.
\end{align*}

Next we show that $d(s,t)\le2^{m+1}K\,\abs{x-y}$.  Let $A$ be the subarc of $\mfS^1$---either $A=[s,t]$ or $A=[t,s]$---with $\vphi(A)=\Gam[x,y]$.  Again we use \rf{L:arc lemma_2k_var} to pick a subarc $I\in \mcJ$ with $I\subset A$ and $\Delta(I)=\D(A)$.  Then $\vphi(I)\subset\vphi(A)=\Gam[x,y]$, so
\begin{align*}
  d(s,t)&\le\diam_d(A)\le 2^{m+1}\D(A)=2^{m+1}\Del(I) \le 2^{m+1}K\,\diam(\vphi(I)) \\
        &\le 2^{m+1}K\,\diam(\Gam[x,y])=2^{m+1}K\,\abs{x-y} \,.
\end{align*}

\vspace*{-5mm}
\end{pf}%

\medskip
We end this subsection with a criterion that describes when a metric circle in $\mcS_1$ is doubling. Roughly speaking, we get doubling \ifff diameters are always at least halved after a fixed number of steps.

\begin{lma} \label{L:dblg} %
Let $\mcI\overset{\Del}\to(0,1]$  be a dyadic diameter function with
snowflake parameter $\sigma=1$ and define $d:=d_\Del$ as in
\eqref{E:dist}.  Then $(\mfS^1,d)$ is doubling \ifff there exists an
$n_0\in\mfN$ such that
$$
  \forall\; n\in\mfN \,, \forall\; I^{n} \,, \forall\; I^{n+n_0}\subset I^{n} \,, \quad  \Del(I^{n+n_0}) \le \half\, \Del(I^{n}) \,.
$$
\end{lma}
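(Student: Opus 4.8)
Throughout we use that $(\mfS^1,d)$ is $1$-bounded turning with $\diam_d(I)=\Del(I)$ for every $I\in\mcI$ (\rf{L:dist}(c),(d)), that in passing from a dyadic arc to a child the value of $\Del$ either halves or stays fixed — so $\Del$ is non-increasing along descendant sequences — and that a dyadic arc and its sibling always carry equal $\Del$-values.

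\emph{Sufficiency.} Assume the $n_0$-condition. Iterating it down a descendant sequence gives $\Del(I^{n+q})\le 2^{-\lfloor q/n_0\rfloor}\Del(I^{n})$ whenever $I^{n+q}\subseteq I^{n}\in\mcI^{n}$. Now proceed exactly as in the proof of \rf{L:dist}(e): given $\veps\in(0,1]$, choose $k\ge 0$ with $2^{-k-1}<\veps\le 2^{-k}$; taking $n=0$, $q=(k+1)n_0$ in the displayed estimate shows each $I\in\mcI^{(k+1)n_0}$ has $\diam_d(I)=\Del(I)\le 2^{-k-1}<\veps$, so it meets any $\veps$-separated set in at most one point, and such a set has at most $2^{(k+1)n_0}\le(2/\veps)^{n_0}$ points. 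As there, this yields $\dimA(\mfS^1,d)\le n_0<\infty$, hence $(\mfS^1,d)$ is doubling.

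\emph{Necessity.} We prove the contrapositive: if no $n_0$ works, then $\dimA(\mfS^1,d)=\infty$. Fix $N\in\mfN$; the hypothesis provides a chain $I^{(0)}\supset I^{(1)}\supset\dots\supset I^{(N)}$ of consecutive dyadic arcs with $\Del(I^{(N)})>\tfrac12\Del(I^{(0)})$. Since each step multiplies $\Del$ by $1$ or $\tfrac12$, the total ratio is $2^{-j}$ for some $j\ge 0$; being larger than $\tfrac12$ it equals $1$, so $\Del\equiv\delta:=\Del(I^{(0)})$ along the whole chain. Passing to the bottom $N$-subchain of a bad chain of length $N+1$ we may also assume $I^{(0)}\ne\mfS^1$. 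Because siblings carry equal $\Del$-values, the sibling $\tilde I^{(j)}$ of each $I^{(j)}$ (including $j=0$) also has $\Del$-value $\delta$, so $\tilde I^{(1)},\dots,\tilde I^{(N)},I^{(N)}$ are $N+1$ pairwise non-overlapping dyadic arcs of $d$-diameter $\delta$, all contained in $I^{(0)}$, which itself has $d$-diameter $\delta$. For $j=1,\dots,N$ pick $z_j\in\tilde I^{(j)}$ with $d(z_j,c_j)\ge\tfrac12\delta$, where $c_j$ is the endpoint $\tilde I^{(j)}$ shares with $I^{(j)}$ (possible since $\diam_d(\tilde I^{(j)})=\delta$), and let $z_{N+1}$ be any point of $I^{(N)}$. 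For $j<k$ the ``short'' arc between $z_j$ and $z_k$ lies in $I^{(j-1)}$ and contains $[z_j,c_j]$, so it has $d$-diameter $\ge\tfrac12\delta$; the ``long'' arc contains one of the diameter-$\delta$ siblings $\tilde I^{(j-1)}$ (or $\tilde I^{(0)}$ when $j=1$), so it has $d$-diameter $\ge\delta$; by $1$-bounded turning $d(z_j,z_k)$ is the smaller of the two, hence $\ge\tfrac12\delta$. Thus $\{z_1,\dots,z_{N+1}\}$ is $\tfrac12\delta$-separated with $d$-diameter $\le\delta$. Since $N$ was arbitrary, $(\mfS^1,d)$ carries, for every $N$, an $r$-separated set of $N+1$ points of diameter $\le 2r$, so no pair $(C,\alpha)$ bounds $\card(S)$ by $C(\diam(S)/r)^{\alpha}$ and $\dimA(\mfS^1,d)=\infty$.

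\emph{The delicate point} is the inference ``$\dimA\le n_0$'' in \emph{Sufficiency}: as already in \rf{L:dist}(e), turning ``every $\veps$-separated set has $\lesssim\veps^{-n_0}$ points'' into the genuinely scale-invariant Assouad estimate requires the self-similarity of the models. For each $I_*\in\mcI$ the renormalization $\Del/\Del(I_*)$ on the dyadic subarcs of $I_*$ is again a dyadic diameter function with snowflake parameter $1$ and the same $n_0$-condition; and — using that the sibling of $I_*$ carries $\Del$-value $\Del(I_*)$, so that $d|_{I_*}$ is actually computed by chains inside $I_*$, together with the arc analogue of \rf{L:arc lemma_2k_var} for $m=1$ — the metrics $d|_{I_*}$ and $\Del(I_*)\cdot d_{\Del/\Del(I_*)}$ on $I_*$ are bi-Lipschitz with a universal constant. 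Fed into the count above, this lets one cover an $r$-separated set of diameter $D$ by boundedly many (depending only on $n_0$) dyadic arcs of $\Del$-value $\lesssim D$ and conclude $\card(S)\lesssim_{n_0}(D/r)^{n_0}$.
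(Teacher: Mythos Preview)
Your argument is essentially correct but takes a more circuitous route than the paper. For \emph{sufficiency}, the paper bypasses your ``delicate point'' entirely by verifying the doubling definition directly: given any arc $A\subset\mfS^1$, \rf{L:arc lemma_2k_var} (with $m=1$) produces $I\in\mcI^n$, $J\in\mcI^m$ with $I\cup J\subset A\subset\hat I\cup\hat J$; the $2^{n_0+1}$ arcs of $\mcI^{n+n_0}$ inside $\hat I$, together with the $2^{n_0+1}$ arcs of $\mcI^{m+n_0}$ inside $\hat J$, cover $A$, and each has $\Del$-value at most $\tfrac12\Del(I)\le\tfrac12\diam_d(A)$ (using that all children of $\hat I$ carry the same $\Del$-value as $I$). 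This yields doubling with $N=2^{n_0+2}$ in one stroke---no Assouad-dimension estimate, no bi-Lipschitz comparison of $d|_{I_*}$ with a rescaled model. For \emph{necessity}, the paper argues directly rather than contrapositively: assuming doubling with constants $(C,\alpha)$, any descendant chain $I^n\supset\dots\supset I^{n+k}$ along which $\Del\ge r:=\tfrac12\Del(I^n)$ has its set $E$ of endpoints $r$-separated (if $e_1$ is an endpoint of $I^i$ and $e_2\in I^j\subsetneq I^i$ with $e_1\notin I^j$, the sibling $\tilde I^{j}$ separates them, so $d(e_1,e_2)\ge\Del(\tilde I^{j})\ge r$); since $\diam_d(E)=2r$, doubling gives $k\le\card(E)\le C\cdot 2^\alpha$, whence $n_0:=\lceil 2^\alpha C\rceil+1$. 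Your contrapositive encodes the same separated-set mechanism, but your choice of interior points $z_j$ forces the two-arc diameter estimate you carry out, whereas the paper's choice of endpoints makes the separation a one-liner.
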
 
\begin{proof}%
Suppose $(\mfS^1,d)$ is doubling.  Then there are constants $C\ge1$ and $\alf\ge1$ such that for each $r$-separated set $E$ in $(\mfS^1,d)$,
$$
  \card(E) \le C \lp \diam_d(E)/r \rp^\alf \,.
$$
Let $I:=I^{n}\in \mcI^n$ be given.  Suppose $(I^{m})_{m=n}^{n+k}$ is a
descendant sequence with $\Del(I^{m})\ge r:=\half \Del(I)$ for all
$m\in\{n,n+1,\dots,n+k\}$. Let $E$ be the set of endpoints of all the
subarcs $I^n, \dots, I^{n+k}$.  To see that $E$ is $r$-separated, let $e_1,e_2$ be
two distinct points in $E$.  We can assume that $e_1$ is an endpoint
of some $I^i$ and $e_2\in I^j\subset I^i$, where $n\leq i< j \leq
n+k$, and that $I^j$ does not contain $e_1$ but $I^{j-1}$ does.  Then
the sibling $\tilde{I}^{j}$ of $I^j$ separates $e_1$ and $e_2$. Thus
$d(e_1,e_2)\geq \Delta(\tilde{I}^{j})= \Delta(I^j)\geq r$. 

Now $\diam_d(E)=\diam_d(I)=\Del(I)=2\,r$, so by doubling
$$
  k \le \card(E) \le C \lp \diam_d(E)/r \rp^\alf = 2^\alf C \,.
$$
Therefore $n_0:= \lceil 2^{\alpha}C\rceil +1$ is the desired number.

\medskip
Conversely, suppose there is such an $n_0\in\mfN$.  Let $A\subset \mfS^1$ be any arc. Let $I\in\mcI^n$, $J\in \mcI^m$  be dyadic subarcs with parents $\hat{I}\in \mcI^{n-1}$, $\hat{J}\in \mcI^{m-1}$ as in \rf{L:arc lemma_2k_var}; thus $I\cup J \subset A \subset \hat{I}\cup \hat{J}$.  Let $I_1,\dots,
I_{2^{n_0+1}}\in \mcI^{n+n_0}$, $J_1,\dots, J_{2^{n_0+1}}\in \mcI^{m+n_0}$ be the dyadic subarcs contained in $\hat{I}$ and $\hat{J}$ respectively. Then for all $1\leq k \leq 2^{n_0+1}$
\begin{equation*}
  \diam_d(I_k)=\Del(I_k) \leq \half \diam_d(I) \leq \diam_d(A)
\end{equation*}
and similarly $\diam_d(J_k)\leq(1/2)\diam_d(A)$. Thus we obtain the doubling condition with $N:=2^{n_0+2}$.
\end{proof}%
\section{Proof of the Main Theorem}  \label{S:Proof} 
Here we establish parts (A), (B), (C) of the Theorem stated in the Introduction; see \rf{s:A}, \rf{s:B}, \rf{s:C} respectively.  In addition, in \rf{s:C} we explain how to recover Rohde's Theorem. 

\showinfo{If the subsection counters are changed to 1,2,3, then we should change our ``parts'' to (1),(2),(3).}

Recall from \rf{s:DDF} that $\mcS_\sig$ is the collection of all
metric circles $(\mfS^1,d_\sig)$ where the metrics $d_\sig=d_{\Del}$
are defined as in \eqref{E:dist} and $\Del:\mcI\to(0,1]$ is any dyadic
diameter function constructed using the snowflake parameter
$\sig\in[1/2,1]$.  Recall too that for $\sigma\in [1/2,1)$ each curve in $\mcS_\sig$ is a
metric quasicircle that has Assouad dimension at most
$\log2/\log(1/\sig)$; see \rf{L:dist}(c,e). 


\medskip
For the remainder of this section, $(\Gam,\ed)$ is a \bt\ circle.  Our three proofs share the following common theme: We define an appropriate shrinking subdivision for $\Gam$ and then appeal to \rf{P:subdiv_homeo} and \rf{L:d BL_to_d_D_variant2k} to obtain the necessary \bl\ \homeo s.  In each case this involves constructing a dyadic diameter function $\Del$ using some snowflake parameter.

\smallskip
To start, we fix an orientation on $\Gamma$.  All subarcs inherit this orientation, and $[a,b]$ denotes the oriented subarc of $\,\Gam$ with endpoints $a,b$.  Next, an appeal to \rf{L:BT iff BL}(b,c) permits us to replace $\ed$ with its associated diameter distance thereby obtaining a \bl\ equivalent 1-\bt\ circle; the \bl\ constant for this change of metric equals the original \bt\ constant.  Thus we may, and do, assume that $(\Gam,\ed)$ is 1-\bt.  This means that
\begin{equation*} \label{eq:diamgam_gam}
  \diam([a,b]) = \abs{a-b} \quad\text{whenever}\quad  \diam([a,b])\leq \diam(\Gamma\setminus[a,b]) \,.
\end{equation*}
We also assume that $\diam(\Gamma)=1$; this involves another \bl\ change of metric with \bl\ constant $\max\{\diam(\Gam),\diam(\Gam)^{-1}\}$.

\subsection{Proof of (A)}  \label{s:A} 
We assume $(\Gam,\ed)$ is 1-\bt\ with $\diam(\Gam)=1$; it need not be doubling.  We construct a dyadic diameter function $\Del$ on $\mcI$, using the snowflake parameter $\sig=1$, so that $(\Gam,\ed)$ is \bl\ equivalent to $(\mfS,d_\Del)$.

First, we divide $\Gamma$ into two arcs $A^1_0, A^1_1$ that both have
diameter one.  Then we inductively divide each arc into two subarcs of
equal diameter.  Appealing to \rf{P:equi_diam}, we divide each $A^n_i$
into two subarcs $A^{n+1}_{2i}, A^{n+1}_{2i+1}$ of equal diameter.
This defines subarcs $A^{n}_k$ for each $k\in\{0,1,\dots,2^n-1\}$ and
all $n\in \mfN$.  Here we label so that the $A^n_k$ are ordered
successively along $\Gamma$ with the initial point of $A^n_0$ the same
for all $n\in \mfN$. 

\showinfo{We could avoid using \rf{P:equi_diam} by selecting midpoints; a \emph{midpoint of an arc} is any point on it that is equi-distant from its endpoints.}

We claim that $\lim_{n\to\infty} \max_k \diam(A^n_k) =0$.  For suppose this does not hold.  Then there is an $\veps>0$ such that the set $\mathbf{\Gamma}_\veps:=  \{A^n_k \mid \diam(A^n_k)\geq \veps\}$ is infinite.  Noting that each parent of an arc in $\mathbf{\Gam}_\veps$ also belongs to $\mathbf{\Gam}_\veps$, we may appeal to K\H{o}nig's Lemma to obtain a descendent sequence $\mfS^1=:A^0\supset A^1\supset A^2\supset \dots$ (where $A^n=A^n_{k_n}$ is some arc in $\mathbf{\Gamma}_\veps$).
By construction $A^n$ is divided into two subarcs $A^{n+1}$ and $B^{n+1}$ of equal diameter, so $\diam(B^{n+1})\geq \veps$.  Then $\{B^1,B^2,\dots\}$ is an infinite collection of non-overlapping subarcs of $\Gam$ each with diameter at least $\veps$.  This contradiction to \rf{L:finite} implies that our claim must hold

\smallskip
By setting $\mcA^n:=\{A^n_k \mid k\in\{0,1,\dots,2^n-1\} \}$ (for each $n\in \mfN$) we obtain a shrinking subdivision $(\mcA^n)_1^\infty$ for $\Gam$; see \rf{s:shrinking}.  In fact, $(\mcI^n)_1^\infty$ and $(\mcA^n)_1^\infty$  are combinatorially equivalent shrinking subdivisions, and thus by \rf{P:subdiv_homeo} there is an induced homeomorphism $\varphi\colon \mfS^1 \to \Gamma$ with $\vphi(I_k^n)=A_k^n$ for all $n\in\mfN$ and all $k\in\{0,1,\dots,2^n-1\}$.

\smallskip
It remains to construct a dyadic diameter function $\Delta$ using the snowflake parameter $\sig=1$ and so that $\Del$ also satisfies the following: for all $n\in\mfN$,
\begin{equation}  \label{eq:DeltadiamGam}
  \text{for all $k\in\{0,1,\dots,2^n-1\}$}\;, \quad
  \frac{1}{2}\, \Delta(I^n_k) \leq \diam(A^n_k) \leq 2 \, \Delta(I^n_k) \,.
\end{equation}
Having accomplished this task, we can appeal to \rf{L:d BL_to_d_D_variant2k} (with $C=1$, $m=1$, $K=2$) to assert that $\varphi:(\mfS^1,d_\Del)\to(\Gam,\ed)$ is $8$-\bl.

\smallskip
We start by setting $\Del(\mfS^1)=\Delta(I^1_0)=\Delta(I^1_1) :=1$ and note that (\ref{eq:DeltadiamGam}) holds for $n=1$.  Now assume that for some $n\in\mfN$ and all $k\in\{0,1,\dots,2^n-1\}$, $\Delta(I^n_k)$ has been defined so that (\ref{eq:DeltadiamGam}) holds.  Consider a dyadic subarc $I^n=I^n_k$, its two  children $I^{n+1}, \tilde{I}^{n+1}\subset I^n$,  and its corresponding arc $A^n=A^n_k=\varphi(I^n_k)\subset\Gamma$.  We note that by construction each child $A^{n+1}$ of $A^n$ satisfies
\begin{gather*}
  \half\,\diam(A^n)\le\diam(A^{n+1})\le\diam(A^n) \,.  \\
  \intertext{\indent We examine two cases.  If $\Delta(I^n) \leq \diam(A^n)$, then we define}
  \Delta(I^{n+1}) =\Del(\tilde{I}^{n+1}) := \Delta(I^n)\,. \\
  \intertext{We see that \eqref{eq:DeltadiamGam} holds (for $n+1$) , since}
  \begin{align*}
      \frac{1}{2}\Delta(I^{n+1}) &= \frac{1}{2}\Delta(I^n) \leq \frac{1}{2}\diam(A^n) \leq \diam(A^{n+1})  \\
      &\leq \diam(A^n) \leq 2\Delta(I^n) = 2 \Delta(I^{n+1})\,.
  \end{align*}
\end{gather*}
Here \eqref{eq:DeltadiamGam} was used for $n$ in the last inequality.

\smallskip
If $\Delta(I^n) > \diam(A^n)$, then we define
\begin{gather*}
  \Delta(I^{n+1}) =\Del(\tilde{I}^{n+1}) := \frac{1}{2}\Delta(I^n) \,.  \\
  \intertext{Again one checks that \eqref{eq:DeltadiamGam} holds (for $n+1$), since}
  \begin{align*}
      \frac{1}{2}\Delta(I^{n+1}) &= \frac{1}{4}\Delta(I^n) \leq  \frac{1}{2}\diam(A^n) \leq \diam(A^{n+1})   \\
      &\leq \diam(A^n)  < \Delta(I^n) = 2 \Delta(I^{n+1}) \,.
  \end{align*}
\end{gather*}
Here \eqref{eq:DeltadiamGam} was used for $n$ in the first inequality.
\qed

\subsection{Proof of (B)}  \label{s:B} 
We assume $(\Gam,\ed)$ is $1$-\bt\ with $\diam(\Gam)=1$ and doubling with finite Assouad dimension $\alf$.  Fix any $\sig\in(2^{-1/\alf},1)$ (equivalently, $\alpha< \log2/\log(1/\sigma$)).  We construct a dyadic diameter function $\Del$ on $\mcI$, using the snowflake parameter $\sig$, so that $(\Gam,\ed)$ is \bl\ equivalent to $(\mfS,d_\Del)$.  In contrast to our above proof of (A), here we do ``$m$ steps at the same time''; i.e., each arc will be divided into $2^m$ subarcs of the same diameter.  That is, we will in fact construct a $2^m$-adic diameter function;  see  \rf{s:2DF}.

\smallskip
Put $\beta:=\log2/\log(1/\sig)$, so $\sig=2^{-1/\beta}$.  Then since $\beta>\alf=\dimA(\Gam)$, there exists an $\veps_0\in(0,1]$ such that for all $\veps\in(0,\veps_0)$, the cardinality of any $\veps D$-separated set $S\subset\Gam$ with $D=\diam(S)$ satisfies
\begin{gather}
  \notag
  \card(S) < \veps^{-\beta} \,.  \\
  \intertext{Since $\sig=2^{-1/\beta}<1$, we may select an $m\in\mfN$
    so that}
  \label{eq:def_m}
  \tau:=\sig^m= \lp 2^{-1/\beta} \rp^m = \lp 2^m\rp ^{-1/\beta} < \veps_0 \,.
\end{gather}
In particular, if $S$ is a $\tau D$-separated subset of $\Gam$, with $D=\diam(S)$, then $\card(S)< \tau^{-\beta}=2^m=: M$.

It now follows that whenever we divide an arc $A$ of $\Gam$ into $M$ subarcs $A_k$ all with equal diameters, then
\begin{equation}  \label{E:diamG_eps}
   M^{-1} \diam(A)\leq \diam A_k \leq \tau \, \diam(A) \,.
\end{equation}
The left-hand inequality follows directly from the triangle inequality whereas the right-hand inequality holds because there are at least $M$ distinct endpoints of the subarcs $A_k$ (which are separated by $\diam A_k$) and so, by the above, these endpoints cannot be $\tau D$-separated with $D:=\diam(A)$

\smallskip
We use \rf{P:equi_diam} to divide $\Gamma$ into $M$ arcs $A^1_0, A^1_1,\dots, A^1_{M-1}$ all of equal diameter.  We iterate this procedure: assuming that arcs $A^n_k$ (with $k\in\{0,1,\dots,M^n-1\}$) have been so constructed, each arc $A^n_k$ is subdivided into $M$ subarcs $A^{n+1}_{kM +j}$ (with $
j\in\{0,1,\dots,M-1\}$) all of equal diameter; the subarcs $A_{kM+j}^{n+1}$ are labeled successively along $A_k^n$.  To avoid possible confusion, we note that all subarcs of the same arc $A^n_k$ have the same diameters, however, subarcs of different arcs $A^n_i, A^n_j$ do not necessarily have the same diameters.

\smallskip

Let $\mcJ=\bigcup_{n=0}^\infty  \mcJ^{n}$ be the family of all $M$-adic subarcs of $\mfS^1$; here $M=2^m$ and $\mcJ^{n} = \mcI^{mn}$ consists of the $M^n=2^{mn}$ subarcs of the form $J_k^n:=[k/2^{mn}, (k+1)/2^{mn}]\in \mcI^{mn}$ with $k\in\{0,1,\dots,2^{mn}-1\}$.  See the last paragraph of \rf{s:DS}.

Setting $\mcA^n:=\{A^n_k \mid k\in\{0,1,\dots,M^n-1\}\}$ (for each $n\in \mfN$) we obtain a shrinking subdivision $(\mcA^n)_1^\infty$ for $\Gam$; see \rf{s:shrinking}.  In fact, $(\mcJ^n)_1^\infty$ and $(\mcA^n)_1^\infty$  are combinatorially equivalent shrinking subdivisions, and thus by \rf{P:subdiv_homeo} there is an induced homeomorphism $\varphi\colon \mfS^1 \to \Gamma$ with $\vphi(J_k^n)=A_k^n$ for all $n\in\mfN$ and all $k\in\{0,1,\dots,M^n-1\}$.

\smallskip
Now we construct an $M$-adic diameter function $\mcJ\overset{\Del}\to(0,1]$ using the snowflake parameter $\tau$ and so that $\Del$ also satisfies the following: for all $n\in\mfN$ and for all $k\in\{0,1,\dots,M^n-1\}$,
\begin{equation}  \label{E:DeltadiamGam2}
  K^{-1} \Delta(J^n_k) \leq \diam(A^n_k) \leq K\, \Delta(J^n_k) \,,
\end{equation}
where $K:=\tau \,M$.  Once this task is completed, we can appeal to \rf{L:d BL_to_d_D_variant2k} (with $C=1$ and $2^m=M$) to assert that $\varphi:(\mfS^1,d_\Del)\to(\Gam,\ed)$ is $L$-\bl\ with $L=2\,M\,K=2\,\tau\,M^2$. 

To start, we set $\Del(\mfS^1):=1$ and then for each $k\in\{0,1,\dots,M-1\}$, we put $\Delta(J^1_k):=\tau$.  To check \eqref{E:DeltadiamGam2} for $n=1$ we use \eqref{E:diamG_eps} and the fact that $\diam(\Gam)=1$ to see that
$$
  \frac1{K} \, \Del(J_k^1)= \frac{\tau}{K} = \frac1{M} \le \diam(A_k^1) \le \tau = \Del(J_k^1) \,.
$$
Assume that for some $n\in\mfN$ and all $k\in\{0,1,\dots,M^n-1\}$, $\Delta(J^n_k)$ has been defined so that \eqref{E:DeltadiamGam2} holds.  Fix any $M$-adic subarc $J^n=J^n_k$ and let $A^n=A^n_k=\varphi(J^n_k)$ be the corresponding subarc of $\Gamma$.  We consider two cases.

First, suppose  $\Delta(J^n) \leq \diam(A^n)$.  Then we define the diameter of each child $J^{n+1}$ of $J^n$ by
$$
  \Delta(J^{n+1}):= \tau \, \Delta(J^n)\,.  \\
$$
To confirm that \eqref{E:DeltadiamGam2} is still satisfied for all these children, we observe that
\begin{align*}
  \frac{1}{K} \, \Del(J^{n+1}) &= \frac{1}{M} \, \Delta(J^n) \le \frac{1}{M} \, \diam(A^n) \le \diam(A^{n+1})   \\
                            &\le \tau \, \diam(A^n) \le \tau \,K\, \Del(J^n) = K \,\Del(J^{n+1}) \,.
\end{align*}
Here the initial inequality holds by supposition, the next two inequalities follow from \eqref{E:diamG_eps}, and the induction hypothesis gives the last inequality.

Next, suppose  $\Delta(J^n)>\diam(A^n)$.  Now we define the diameter of each child $J^{n+1}$ of $J^n$ by
$$
  \Delta(J^{n+1}):= \frac1M \, \Delta(J^n) = \frac1{2^m} \, \Del(J^n)\,.
$$
To check that \eqref{E:DeltadiamGam2} holds for all these children, we again observe that
\begin{align*}
  \frac{1}{K} \, \Del(J^{n+1}) &= \frac{1}{K\,M} \, \Delta(J^n) \le \frac{1}{M} \, \diam(A^n) \le \diam(A^{n+1})   \\
                            &\le \tau \, \diam(A^n) \le \tau \, \Del(J^n) = K \,\Del(J^{n+1}) \,.
\end{align*}
Here the initial inequality holds by the induction hypothesis, the next two inequalities follow from \eqref{E:diamG_eps}, and our supposition gives the last inequality.

This finishes the construction of an $M$-adic diameter function $\Delta$ for which \eqref{E:DeltadiamGam2} holds for all $n\in\mfN$ and all $k\in\{0,1,\dots,M^n-1\}$.

\smallskip
Having defined an appropriate $M$-adic diameter function $\Del$ on $\mcJ$, we use \rf{L:d BL_to_d_D_variant2k} to deduce that $\varphi:(\mfS^1,d_\tau)\to(\Gam,\ed)$ is $L$-\bl, where $d_\tau:=d_{\Del}$.  The $M$-adic diameter function $\Del$, constructed using the snowflake parameter $\tau$, can be extended to a dyadic diameter function $\Del$ that is constructed with the snowflake parameter $\sig=\tau^{1/m}$.  See the discussion in \rf{s:2DF}.  Let $d_\sig$ be the metric associated with the dyadic diameter function $\Del$.  According to \rf{L:DDF to 2^mDF}, the identity map $\id:(\mfS^1,d_\sig)\to(\mfS^1,d_\tau)$ is $M$-\bl.  It now follows that $(\Gam,\ed)$ is $(ML)$-\bl\ equivalent to the metric quasicircle $(\mfS^1,d_\sig)\in\mcS_\sig$.
\qed

\begin{rmk}
  We can easily adjust the previous proof to obtain a model circle constructed from a $4$-adic diameter function.  To do so, we choose $m$ in
  \eqref{eq:def_m} to be even; say, $m=2k$, so $M=4^k$.   Then we extend the $M$-adic diameter function $\mcJ\to (0,1]$ to a $4$-adic diameter function with snowflake parameter $p:=\tau^{1/k}= \sigma^2\in (4^{-1/\alpha},1)$ as described in Remark~\ref{rmk:4m4adic}. This yields a metric $d$, constructed via the $4$-adic diameter function, such that the original metric quasicircle $(\Gamma,\ed)$ is bi-Lipschitz equivalent to $(\mfS^1,d)$.  Thus the following variant of (B) holds.
\end{rmk}

\begin{cor}[(B$'$)]  \label{cor:thmB2}
  Let $(\Gamma,\abs{\cdot})$ be a metric quasicircle with finite Assouad dimension $\alpha$. Then for each $p\in (4^{-1/\alpha},1)$ there is a $4$-adic diameter function $\Delta$, constructed with snowflake parameter $p$, and an associated metric $d=d_\Delta$, such that $(\mfS^1,d)$ is \bl\ equivalent to $(\Gamma,\abs{\cdot})$.
\end{cor}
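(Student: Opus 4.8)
The plan is to derive this corollary directly from part~(B) of our Theorem --- more precisely, from its proof in \rf{s:B} --- by one cosmetic change in the choice of the integer $m$ together with the $4$-adic versions of our dyadic machinery recorded in Remark~\ref{rmk:4m4adic}. Given $p\in(4^{-1/\alpha},1)$, set $\sig:=\sqrt{p}$. Since $t\mapsto t^2$ is an increasing bijection of $(0,1)$ onto itself and $\sqrt{4^{-1/\alpha}}=2^{-1/\alpha}$, the hypothesis $p\in(4^{-1/\alpha},1)$ is equivalent to $\sig\in(2^{-1/\alpha},1)$, which is precisely the range of snowflake parameters admitted by~(B).

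First I would run the argument of \rf{s:B} verbatim for this $\sig$, with the sole modification that the integer $m$ selected in \eqref{eq:def_m} be taken \emph{even}; this is harmless, since if $m$ satisfies $\sig^m<\veps_0$ then so does $m+1$, so an even such $m$ exists. Write $m=2k$, so that $M=2^m=4^k$ and $\tau:=\sig^m=p^k$. Exactly as in \rf{s:B}, the argument --- run up through the appeal to \rf{L:d BL_to_d_D_variant2k} --- produces a $4^k$-adic diameter function $\Del\colon\mcJ\to(0,1]$ with snowflake parameter $\tau\in[4^{-k},1)$, together with a homeomorphism $\vphi\colon\mfS^1\to\Gam$ that is $L$-\bl\ from $(\mfS^1,d_\tau)$ to $(\Gam,\ed)$, where $d_\tau:=d_\Del$ and $L$ depends only on $\alpha$ and $p$. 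Next, by the extension procedure of Remark~\ref{rmk:4m4adic}, this $\Del$ extends to a $4$-adic diameter function --- still denoted $\Del$ --- on $\bigcup_n\mcI^{4^n}$, constructed with snowflake parameter $\tau^{1/k}=\sig^2=p\in(4^{-1/\alpha},1)$. Let $d:=d_\Del$ be its associated metric. The $4$-adic analog of \rf{L:DDF to 2^mDF}, also recorded in Remark~\ref{rmk:4m4adic}, gives $4^{-k}d_\tau\le d\le d_\tau$, so $\id\colon(\mfS^1,d)\to(\mfS^1,d_\tau)$ is $4^k$-\bl. Composing with $\vphi$ yields a \bl\ homeomorphism $(\mfS^1,d)\to(\Gam,\ed)$ --- with \bl\ constant at most $4^kL$ --- which is exactly the assertion of the corollary.

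The only thing that genuinely needs checking lies behind Remark~\ref{rmk:4m4adic}: that the development of \rf{s:DS} through \rf{s:2DF} carries over with ``$2$'' replaced by ``$4$'' throughout --- namely, that $4^\ell$-adic diameter functions behave like $2^m$-adic ones, that such a function extends to a $4$-adic one with the $\ell$-th root snowflake parameter, and that the two associated metrics are \bl\ equivalent with constant $4^\ell$. The arithmetic ($\sqrt p$ lands in the required interval, $\tau^{1/k}=\sig^2=p$, $m$ may be taken even) is immediate, and I expect no mathematical obstacle: the one place where the combinatorics of subdivision genuinely enters, the proof of \rf{L:arc lemma_2k_var}, uses only the parent/child/sibling structure of the subdivision tree and is therefore insensitive to whether arcs split into $2$, $4$, or $2^m$ children. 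The ``hard part'' is thus purely the routine of transcribing that argument with the larger branching factor.
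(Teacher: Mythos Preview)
Your proposal is correct and follows essentially the same route as the paper: the paper's own justification (contained in the Remark immediately preceding the Corollary) is precisely to take $m$ even in \eqref{eq:def_m}, write $m=2k$ so $M=4^k$, and then extend the resulting $4^k$-adic diameter function to a $4$-adic one with snowflake parameter $p=\tau^{1/k}=\sig^2$ via Remark~\ref{rmk:4m4adic}. Your only addition is the harmless observation that, since the Corollary is stated starting from a given $p$, one should begin by setting $\sig:=\sqrt{p}$ and noting that $p\in(4^{-1/\alpha},1)$ is equivalent to $\sig\in(2^{-1/\alpha},1)$; this makes explicit what the paper leaves implicit.
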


\noindent
Note that $1\leq \alpha<2$ is equivalent to $1/4 \leq 4^{-1/\alpha}<1/2$, so in this case we can choose $p\in(4^{-1/\alpha},1/2)\subset(1/4,1/2)$.

\showinfo{Note that $ML=2\tau M^3=2(8\sig)^m\le2\veps_0 8^m$.  Also, $8\sig>4$.  Finally, if we pick $m$ so that
$$
  \sig^m \le \veps_0 < \sig^{m-1} \quad\text{then}\quad \frac{\log(\veps_0)}{\log(\sig)} \le m < 1 + \frac{\log(\veps_0)}{\log(\sig)} \,.
$$
We can write $\frac{\log(\veps_0)}{\log(\sig)}$ as $\beta \log_2(1/\veps_0)$ and....then I guess the BL constant is not more than  $16\, \veps_0^{1-3\beta}$....for whatever that is worth!  \\
The real problem here is that I do not see how to get an UPPER bound on $\veps_0$.  We seem to lose quantitative control when we pick $\veps_0$.  This is related to the proof of \rf{L:lma}.}

\subsection{Planar Quasicircles}  \label{s:C} 
In \ref{pf of (C)} below we corroborate part (C) of our Theorem.  Then we explain how to recover Rohde's theorem.  We begin with a precise description for the construction of Rohde snowflakes that includes some useful geometric estimates.

\smallskip
Everywhere throughout this subsection $\mcJ$ denotes the family of 4-adic subarcs of the circle $\mfS^1$.

\smallskip
Each \emph{Rohde snowflake} $R$, constructed using a parameter $p\in[1/4,1/2)$, is the Hausdorff limit of a sequence $(R^n)_1^\infty$ of polygons where $R^{n+1}$ is obtained from $R^n$ by using the replacement choices illustrated in \rf{f:Rohde_snow}.  Both the snowflake parameter $p$ and the polygonal arc $A_p$ are kept fixed throughout the construction.

We start with the unit square $R^1=E^1_0\cup E^1_1\cup E^1_2 \cup E^1_3$, so each $E^1_k$ is a Euclidean line segment of diameter one and these are labeled successively along $R^1$.  Suppose we have constructed $R^n$ as a union of $4^n$ Euclidean line segments $E^n_k$, $k\in\{0,1,\dots,4^n-1\}$ (labeled successively along $R^n$).  Then for each of the edges $E^n_k$ of $R^n$ we have two choices: either we replace $E^n_k$ with the four line segments obtained by dividing $E^n_k$ into four segments of equal diameter, or we replace $E^n_k$ by a similarity copy of the polygonal arc $A_p$ pictured at the top right of \rf{f:Rohde_snow}.  In both cases $E^n_k$ is replaced by four new line segments $E^{n+1}_{4k+j}$ (with $j\in\{0,1,2,3\}$) that we call the \emph{children} of $E^n_k$, so $E^n_k$ is the \emph{parent} of each of $E^{n+1}_{4k},E^{n+1}_{4k+1},E^{n+1}_{4k+2},E^{n+1}_{4k+3}$.  Each of these children has Euclidean diameter equal to either $(1/4)\diam(E^n_k)$ in the first case or $p\,\diam(E^n_k)$ in the second case.  The second type of replacement is done so that the ``tip'' of the replacement arc points into the exterior of $R^n$.  Then $R^{n+1}$ is the union of the $4^{n+1}$ arcs $E^{n+1}_i$ (with $i\in\{0,1,\dots,4^{n+1}-1\}$).

We call the line segments $E^n_k$ the \emph{$4$-adic edges of $R^n$}. We note that different replacement rules can be used for different edges $E^n_i$, $E^n_j$ of $R^n$.  Thus, for example, one edge could have diameter $1/4^n$ while an adjacent edge might have diameter $p^n$ (which could be much larger).  In any event, for each $n\in \mfN$ there is a natural \homeo\ $\vphi_n:\mfS^1\to R^n$ that is given by mapping each 4-adic subarc $J^n_k\subset\mfS^1$ to the 4-adic edge $E^n_k\subset R^n$.  We say that the edge $E^n_k$ \emph{corresponds to} the subarc $J^n_k$.

\smallskip
Set $\tha=\tha(p):=2\arcsin((2p)^{-1}-1)$; this is the interior angle at the ``tip'' of the arc $A_p$ in \rf{f:Rohde_snow}, but see also the left-most picture in \rf{f:Rohde2}.  Also, notice that if $A_p$ is normalized to have diameter one, then its height is $(p-1/4)^{1/2}$.

\begin{figure}[b] %
  \begin{overpic}[width=12cm,  
    tics=10]{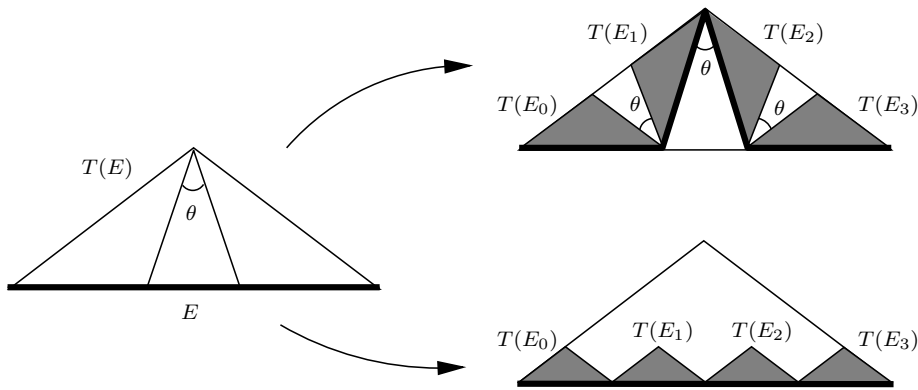}
    %
    \put(9,24){$\scriptstyle{T(E)}$}
    \put(20,8){$\scriptstyle{E}$}
    \put(20.5,19){$\scriptstyle{\theta}$}
    %
    \put(55,31){$\scriptstyle{T(E_0)}$}
    \put(65,39){$\scriptstyle{T(E_1)}$}
    \put(84.5,39){$\scriptstyle{T(E_2)}$}
    \put(95,31){$\scriptstyle{T(E_3)}$}
    \put(69.7,31){$\scriptstyle{\theta}$}
    \put(77.5,35){$\scriptstyle{\theta}$}
    \put(85.7,30.6){$\scriptstyle{\theta}$}
    %
    \put(55,5){$\scriptstyle{T(E_0)}$}
    \put(70,6){$\scriptstyle{T(E_1)}$}
    \put(81,6){$\scriptstyle{T(E_2)}$}
    \put(95,5){$\scriptstyle{T(E_3)}$}
  \end{overpic}
  \caption{Triangles enclosing an arc.}
  \label{f:Rohde2}
\end{figure} %

Let $E$ be one of the 4-adic edges of some $R^n$.  We write $T(E)=T_p(E)$ for the closed isosceles triangle with base $E$ and height $\diam(E)(p-1/4)^{1/2}$; we orient $T(E)$ so that it ``points'' into the exterior of the polygon $R^n$.  Thus if $E$ were to be  replaced by a similarity copy of the arc $A_p$, then $T(E)$ would be the closed convex hull of this affine copy of $A_p$ (see the left-most picture in \rf{f:Rohde2}) and the third vertex of $T(E)$ would correspond to the ``tip'' of this image of $A_p$.  We call this third vertex the ``tip'' of $T(E)$.

Next, let $E_0, E_1, E_2, E_3$ be the four children of $E$. 
Not only are these children contained in $T(E)$, but elementary geometric considerations reveal that the associated triangles $T(E_0),T(E_1),T(E_2),T(E_3)$ are also contained in $T(E)$.  See the two right-most pictures in \rf{f:Rohde2}.  A standard argument now reveals that the sequence $(\vphi_n)_1^\infty$ is uniformly Cauchy, and hence it converges to a continuous surjection $\vphi:\mfS^1\to R$ and the planar curve $R$ is the Hausdorff limit of the sequence $(R^n)_1^\infty$.

\showinfo{Also, the Hausdorff distance between $R^m$ and $R^n$ is at most $p^{\min\{m,n\}}$.}

Consider a subcurve $A:=\varphi(J)$ of $R$ where $J$ is some 4-adic subarc of $\mfS^1$.  Let $E$ be the 4-adic edge that corresponds to $J$.  We see that $A$ is ``built on top of $E$'' in the sense that the replacement choices used to construct $R$, applied to the edge $E$, produce $A$.  We write $A:=R(E)$ and call $A$ the \emph{$4$-adic subarc of $R$ corresponding to $E$ (and to $J$)}.  (This abuse of notation will be justified below---see \eqref{E:last eqn}---where we prove that $\vphi$ is injective, hence a \homeo, so $R$ is a Jordan curve and $A$ is an arc.)  By induction, we deduce that $A$ also lies in $T(E)$ and has the same endpoints as $E$, therefore
$$
  \diam(A)=\diam(T(E))=\diam(E)\,.
$$

\smallskip

Looking again at the right-most pictures in \rf{f:Rohde2}, and appealing to elementary geometric considerations, we see that the angle between any pair of consecutive triangles $T(E_0), \dots, T(E_3)$ is at least $\theta$.  It is also elementary to check that
\begin{equation}\label{E:dist(T0,T2)}\begin{split}
  \dist(T(E_0),T(E_3)) &\ge \dist(T(E_1),T(E_3)) \\ &=\dist(T(E_0),T(E_2)) \ge c(p) \, \diam(E)
\end{split}
\end{equation}
where $c(p):=\half-p$.

\smallskip

As final preparation for our proof of part (C), suppose $\hat{I},\hat{J}$ are two adjacent $4$-adic subarcs of $\mfS^1$, say with $\hat{I}\cap\hat{J}=\{\xi\}$.  (These arcs might be from different generations; i.e., possibly $\hat{I}=J^n_k$ and $\hat{J}=J^m_\ell$ where $n\ne m$.)
Let $\hat{E},\hat{F}$ be the corresponding $4$-adic edges, so $\hat{E}\cap\hat{F}=\{a\}$ where $a:= \varphi(\xi)$.

It follows from the above remarks that the angle between the two triangles $T(\hat{E})$ and $T(\hat{F})$, at their common vertex $a$, is at least $\tha$.  See \rf{f:Rohde3}.  More precisely, let $S$ be the closed sector, with vertex at $a$, that contains $T(\hat{E})$ and is such that $\tha$ is the angle between each edge of $\bd S$ and the nearest edge of $T(\hat{E})$.  Then $T(\hat{F})$ lies in the closure of $\mfR^2\sm S$.



Now suppose there is a child $E$ of $\hat{E}$ that does not contain $a$. Then $T(E)$ is compactly contained in the sector $S$ and in fact
\begin{equation}  \label{E:crucial}
  \dist(T(E), T(\hat{F})) \geq \dist(T(E), \partial S)  \geq c(p) \diam(E)
\end{equation}
where again $c(p):=\half-p$.  This follows from the estimates
$$
  \dist(T(E), \partial S) \ge \dist(b,\bd S) \geq c(p) \diam(E)
$$
where $b$ is the ``tip'' of the appropriate $T(E_0)$ as pictured in \rf{f:Rohde3}.

\begin{figure} %
\begin{overpic}[width=12cm,  
    tics=10]{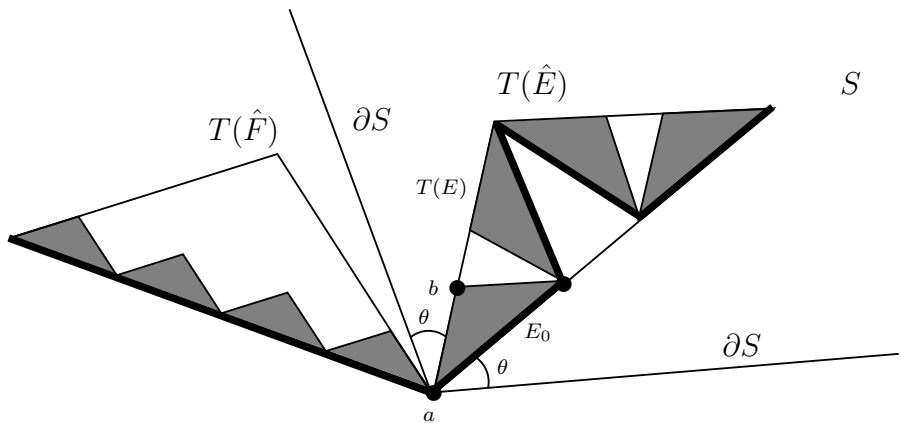}
    \put(46.3,8.5){$\scriptstyle{\theta}$}  \put(55,3){$\scriptstyle{\theta}$}
    \put(93,34){$S$}  \put(80,5){$\bd S$}  \put(39,30){$\bd S$}
    \put(55,34){${T(\hat{E})}$}
    \put(23,29){${T(\hat{F})}$}
    \put(58,7){$\scriptstyle{E_{\tiny{0}}}$}
    \put(46,23){$\scriptstyle{T({E})}$}
    \put(62.4,13){\circle*{1.7}}      
    \put(50.6,12.6){\circle*{1.7}}      \put(47.4,11.8){$\scriptstyle{b}$}
    \put(48,0.9){\circle*{1.7}}
    \put(46.8,-2.2){$\scriptstyle{a}$}
\end{overpic}
\caption{Separating points.}
\label{f:Rohde3}
\end{figure} %

Finally, fix points $s,t\in\hat{I}\cup\hat{J}$.  Suppose there is a child $I$ of $\hat{I}$ whose interior, $\operatorname{int}(I)$, \emph{separates} $s,t$ in $\hat{I}\cup\hat{J}$ (meaning that $s,t$ lie in different components of $(\hat{I}\cup \hat{J}) \setminus \operatorname{int}(I)$).  We claim that
\begin{equation}  \label{E:last eqn}
    \abs{\vphi(s)-\vphi(t)} \geq c(p)\diam(\vphi(I)) \,.
\end{equation}
This follows from \eqref{E:dist(T0,T2)} if both $\vphi(s),\vphi(t)$ lie in $T(\hat{E})$; otherwise it follows from \eqref{E:crucial}.  Also, see \rf{f:Rohde3}.

Notice that injectivity of $\vphi$ follows from \eqref{E:last eqn}.

\smallskip
Having established the above terminology and geometric estimates, we now turn to the following.

\begin{pf}{Proof of {\rm(C)}}  \label{pf of (C)}%
We use the notation and terminology introduced above.

\smallskip
It is well-known that planar quasicircles have Assouad dimension strictly less than two; see \cite[Lemma~4.1]{Rohde-qcircles-mod-bl} or \cite[Theorem~5.2]{Luuk-ass-dim}. Furthermore, Assouad dimension is unchanged by \bl\ maps. Thus every metric quasicircle that is \bl\ equivalent to a planar quasicircle has Assouad dimension strictly less than two.

\smallskip
Let $(\Gamma,\ed)$ be a metric quasicircle with Assouad dimension $\alf\in[1,2)$.  
We prove that $(\Gamma,\ed)$ is \bl\ equivalent to a planar quasicircle. In fact, we show that it is \bl\ equivalent to a Rohde snowflake.

Fix $p\in (4^{-1/\alpha},1/2)\subset (1/4,1/2)$.  According to part (B) of our Theorem---more precisely, the version (B$'$) stated as Corollary~\ref{cor:thmB2}---there is a $4$-adic diameter function $\Delta$ with snowflake parameter $p$ and associated metric $d_p$ such that $(\Gamma,\abs{\cdot})$ is \bl\ equivalent to $(\mfS^1,d_p)$.



We use the 4-adic diameter function $\Del$ to construct a Rohde snowflake $R$ with snowflake parameter $p$, and we prove that $(\mfS^1,d_p)$ is \bl\ equivalent to $R$.  Hence $(\Gam,\ed)$ is \bl\ equivalent to a planar quasicircle.  

Recall that $\mcJ$ is the set of all $4$-adic subarcs of $\mfS^1$; similarly, $\mcJ^n:= \mcI^{4n}$.

\medskip

It is convenient to scale the metric $d_p$---so also the diameter function $\Del$---by the factor $1/p$.  This \bl\ change in our metric means that for each $J^1_k\in\mcJ^1$, $\Del(J^1_k)=1$.  See the paragraph immediately following \eqref{E:DeltadiamGam2}.

The desired Rohde snowflake $R$ is the limit of a sequence $(R^n)_1^\infty$ of polygons, and we must describe how to replace each edge of $R^n$ to obtain $R^{n+1}$.  Of course, we start with  the unit square $R^1:=E^1_0\cup E^1_1 \cup E^1_2 \cup E^1_3$, so each edge $E^1_k$ satisfies $\Del(J^1_k)=1=\diam(E^1_k)$. Now suppose that we have constructed polygons $R^1,R^2, \dots, R^n:=E^n_0\cup\dots\cup E^n_{4^n-1}$ so that
$$
  \text{for each }\; k\in\{0,1,\dots,4^n-1\} \,, \quad \Del(J^n_k)=\diam(E^n_k) \,.
$$

Fix any $J=J^n_k$ and consider its four children $J_0,J_1,J_2,J_3$.  Since $\Del$ is a 4-adic diameter function (constructed with the snowflake parameter $p$),
\begin{align*}
  \quad\text{either}\quad & \Del(J_0)=\Del(J_1)=\Del(J_2)=\Del(J_3):=\frac14\,\Del(J)  \\
  \quad\text{or}\quad & \Del(J_0)=\Del(J_1)=\Del(J_2)=\Del(J_3):=p\,\Del(J) \,.
\end{align*}
In the first case, we replace the edge $E^n_k$ with the four segments $E^{n+1}_{4k}$, $E^{n+1}_{4k+1}$, $E^{n+1}_{4k+2}$, $E^{n+1}_{4k+3}$ obtained by dividing $E^n_k$ into four line segments of equal diameter.  Thus here $\diam(E^{n+1}_j)=(1/4)\diam(E^n_k)$.  In the second case, we replace $E^n_k$  by a similarity copy of the polygonal arc $A_p$ pictured at the top right of \rf{f:Rohde_snow}; again $E^n_k$ is replaced by four new segments $E^{n+1}_j$, but now each of these has diameter $\diam(E^{n+1}_j)=p\,\diam(E^n_k)$.  The second type of replacement is done so that the ``tip'' of the replacement arc points into the exterior of $R^n$.

It is now straightforward to check that
$$
  \text{for each }\; k\in\{0,1,\dots,4^{n+1}-1\} \,, \quad \Del(J^{n+1}_k)=\diam(E^{n+1}_k) \,.
$$
In particular, we can iterate this construction and thus obtain a sequence $(R^n)_1^\infty$ of planar polygons.  As explained above, the sequence $(R^n)_1^\infty$ converges, in the Hausdorff metric, to a Rohde snowflake $R\,$ that has been constructed using the snowflake parameter $p$.

\smallskip
Let $\mfS^1\overset{\vphi}\to R$ be the natural \homeo\ induced by the correspondences between the 4-adic subarcs of $\mfS^1$, all 4-adic edges, and the  4-adic subarcs of $R$ (see the paragraphs just before \eqref{E:dist(T0,T2)}).  Thus each 4-adic edge $E^n_k$ (of $R^n$) corresponds to a 4-adic subarc $A^n_k=R(E^n_k)=\vphi(J^n_k)$ of $R$ and
$$
  \diam(A^n_k)=\diam(T(E^n_k))=\diam(E^n_k) = \Del(J^n_k) \,.
$$
We claim that $(\mfS^1,d_p)\overset{\vphi}\to(R,\ed)$ is \bl\ with
\begin{equation}  \label{E:phi BL}
  [c(p)/8] \, d_p(s,t) \le |\vphi(s)-\vphi(t)| \le 8\, d_p(s,t) \quad\text{for all $s,t\in\mfS^1$}
\end{equation}
where $c(p):=\half-p$.

To verify this claim, let $s,t$ be two points in $\mfS^1$ and write $[s,t]$ for the smaller diameter subarc of $\mfS^1$ joining $s,t$.  Appealing to \rf{L:arc lemma_2k_var}, we get 4-adic subarcs $I,J$ of $\mfS^1$ such that: 
\begin{align*}
  & I \cup J \subset [s,t] \subset \hat{I}\cup\hat{J} \,, \\
  & \Del(I) \le \diam_{d_p}([s,t])=d_p(s,t) \le 8\, \Del(I) \,,  \\
  & \text{$\Del(I)$ is maximal among all 4-adic subarcs in $[s,t]$} \,,  \\[-1mm]
  & \text{either $I=J$ or $\hat{I},\hat{J}$ are adjacent subarcs} \,.
\end{align*}
Here $\hat{I}, \hat{J}$ are the 4-adic parents of $I,J$.  Put $x:=\vphi(s), y:=\vphi(t)$.  Let $A:=\vphi(I),B:=\vphi(J)$ and $E,F$ be the 4-adic subarcs of $R$ and 4-adic edges (respectively) that correspond to $I,J$; also, $\hat{A}=\vphi(\hat{I}),\hat{B}=\vphi(\hat{J})$ are the parents of $A,B$.

Since $x,y\in \hat{A}\cup \hat{B}$,
\begin{align*}
  |x-y| &\le \diam(\hat{A}\cup\hat{B}) \le \diam(\hat{A}) + \diam(\hat{B}) = \Del(\hat{I}) + \Del(\hat{J}) \\
                &\le 4 \left[ \Del(I) + \Del(J) \right] \le 8 \,\Del(I) \le 8\,d_p(s,t)
\end{align*}
which establishes the upper estimate in \eqref{E:phi BL}.  To prove the lower estimate in \eqref{E:phi BL}, we observe that $\operatorname{int}(I)$ separates $s,t$ in $\hat{I}\cup \hat{J}$ and thus \eqref{E:last eqn} yields
$$
  |x-y| \ge c(p) \diam(\vphi(I)) = c(p) \, \Del(I) \ge  [c(p)/8] \, d_p(s,t) \,.  \qedhere
$$

\vspace*{-5mm}
\end{pf}%



It is worthwhile to observe that the above provides an independent proof that each Rohde snowflake is a quasicircle; in fact, each $R$ in $\mcR_p$ is $C$-\bt\ with $C=C(p):=8/c(p)=16/(1-2p)$.

\medskip

We close this paper by explaining how Rohde's theorem follows from our
Theorem. From the proof of part (C) of our Theorem,
each planar quasicircle is \bl\ equivalent to a Rohde snowflake.
Therefore, Rohde's theorem follows from the fact that a \bl\ \homeo\
between planar quasicircles has a \bl\ extension to the entire plane.
Below we state this extension theorem, due to Gehring \cite[Theorem~7,
Corollary~2]{G-inj}, as \rf{T:qc_bL_extension}.  The construction of
the extension essentially follows from the Beurling-Ahlfors extension
\cite{BA-bdry-corr}.  See also \cite[Lemma~3]{Tukia-extension} and
\cite[Theorems~2.12,2.19]{TV-BL-ext}.

Interestingly, the property of there being such a \bl\ extension, for every \bl\ self-\homeo, is a characteristic property of quasicircles among all closed (that is, bounded, so compact) planar Jordan curves.  See \cite[Theorem~5.1]{G-extQI}.

\begin{thm}[{\cite{G-inj}}]  \label{T:qc_bL_extension}
  Each bi-Lipschitz \homeo\ 
  between planar quasicircles extends to a bi-Lipschitz self-homeomorphism of the plane.  The \bl\ constant for the extension depends only on the original \bl\ constant and the two original \bt\ constants. 
\end{thm}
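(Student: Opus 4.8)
The plan is to extend $f$ across each of the two complementary Jordan domains of $\Gam_1$ separately and then paste the extensions together. Let $f\colon\Gam_1\to\Gam_2$ be $L$-\bl, and let $D_i,U_i$ be the bounded and unbounded complementary components of $\Gam_i$. Fixing a disk $B$ that contains $\Gam_1\cup\Gam_2$, I would build an $L'$-\bl\ \homeo\ $F\colon\cD_1\to\cD_2$ extending $f$, an $L'$-\bl\ \homeo\ $G$ from the closed ring between $\Gam_1$ and $\bd B$ onto the closed ring between $\Gam_2$ and $\bd B$ that equals $f$ on $\Gam_1$ and $\id$ on $\bd B$, and then set the extension $H$ equal to $F$ on $\cD_1$, $G$ on the ring between $\Gam_1$ and $\bd B$, and $\id$ outside $B$. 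That $H$ is \bl\ is a routine pasting argument: if $x,y$ lie in a common one of the three closed pieces, the $L'$-bound applies directly; otherwise the segment $[x,y]$ crosses the interface curves $\Gam_1$ and $\bd B$, and inserting the crossing points $z$ (which satisfy $|x-z|,|z-y|\le|x-y|$ and lie in the two pieces they separate) and using the triangle inequality together with the $L'$-\bl\ bounds yields an upper \bl\ bound for $H$; the same applied to $H^{-1}$ gives the lower one, and continuity plus bijectivity make $H$ a \homeo\ by invariance of domain. So everything reduces to one task: extend a \bl\ boundary \homeo\ between two quasidisks (here $D_1\to D_2$, or a ring quasi-region onto another) to a \bl\ \homeo\ of the domains.

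This single-quasidisk extension is the substantive step, and it is where the \bt\ constants enter. Since $\Gam_1,\Gam_2$ are quasicircles, $D_1,D_2$ (and $U_1,U_2$) are quasidisks; an $L$-\bl\ map is $\eta$-\qsc\ with $\eta(t)=L^2t$, and a \qsc\ \homeo\ between boundaries of quasidisks admits a \qc\ extension --- transfer to the unit disk by Riemann maps and invoke the Beurling--Ahlfors extension \cite{BA-bdry-corr} together with Ahlfors' quasicircle criterion \cite{Ahlfors-QCrflxns}. This produces a \qc\ extension $F_0\colon\cD_1\to\cD_2$ of $f$, which --- being \qc\ between bounded quasidisks and agreeing with the $L$-\bl\ map $f$ on the boundary --- is already \bl\ on any compact subset of $D_1$; only the behaviour of $F_0$ in arbitrarily thin collars of $\Gam_1$ needs to be corrected. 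Following Tukia--\Va\ \cite{TV-BL-ext} and Tukia \cite{Tukia-extension}, I would fix a Whitney decomposition of $D_1$, keep $F_0$ on the Whitney squares lying a definite distance from $\Gam_1$, and on a square $Q$ near $\Gam_1$ replace $F_0$ by a map assembled from $f$ restricted to the ``shadow'' of $Q$ (a subarc of $\Gam_1$ of diameter comparable to $\diam Q$ at comparable distance), placed into $D_2$ using that $D_2$ is linearly locally connected; a Lipschitz partition of unity subordinate to the Whitney cover then splices these local models into a \homeo\ $F$. One checks $F$ is \bl\ by treating a pair $x,y$ according to whether $|x-y|$ is comparable to, larger than, or much smaller than $\dist(x,\Gam_1)$, using the comparability of Euclidean and quasihyperbolic geometry in a quasidisk (equivalently the two-sided cigar condition), which is quantitatively governed by the \bt\ constants of $\Gam_1$ and $\Gam_2$, together with the $L$-\bl\ bounds on the shadows. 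The ring extension $G$ is the same construction, carried out in the region between $\Gam_i$ and the fixed circle $\bd B$ and handling the two boundary components separately.

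The main obstacle is precisely this near-boundary repair --- defining the local replacement of $F_0$ on each Whitney square and, especially, verifying the \bl\ estimates across the partition-of-unity splicing. All the genuine quasidisk geometry (the two-sided \bt/linear local connectivity structure and the resulting control of the quasihyperbolic metric) is consumed there, and it is exactly what forces the extension constant to depend on $L$ and on the two \bt\ constants, as the theorem asserts. Everything else --- the reduction to the two complementary domains, the existence of \emph{some} \qc\ extension, and the pasting --- is comparatively soft.
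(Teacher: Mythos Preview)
The paper does not prove this theorem; it merely states it with attribution to Gehring \cite[Theorem~7, Corollary~2]{G-inj}, noting that the construction ``essentially follows from the Beurling--Ahlfors extension'' and pointing also to \cite[Lemma~3]{Tukia-extension} and \cite[Theorems~2.12,~2.19]{TV-BL-ext}. So there is no in-paper proof to compare against; your sketch is being measured against the literature the paper cites, and at that level your overall architecture (extend over each complementary quasidisk separately, then paste) and your choice of tools (Beurling--Ahlfors, Whitney decomposition, linear local connectivity of quasidisks) are the right ones.

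One substantive gap: your assertion that the quasiconformal extension $F_0$ ``is already \bl\ on any compact subset of $D_1$'' is false in general. A quasiconformal map need not be locally \bl; for instance the radial stretch $z\mapsto z\lvert z\rvert^{K-1}$ on the unit disk is quasiconformal, equals the identity on the boundary circle, yet fails to be \bl\ near the origin. So the scheme ``take any quasiconformal extension and repair it only in a thin collar of $\Gam_1$'' does not work as stated. The actual arguments in the cited references proceed differently: either one shows that the \emph{specific} extension obtained by conjugating Beurling--Ahlfors through Riemann maps of the quasidisks is globally \bl\ (using that the Beurling--Ahlfors extension is \bl\ for the hyperbolic metric and that Riemann maps of quasidisks have quantitatively controlled derivative behaviour up to the boundary), or one builds the \bl\ extension directly from the boundary data via a Whitney scheme without passing through an auxiliary quasiconformal map at all. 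The Whitney/shadow/partition-of-unity machinery you describe belongs to the latter route, not to a ``repair $F_0$ near the boundary'' step; once you drop the intermediate $F_0$ and run that machinery from scratch, your outline becomes the Tukia--\Va\ argument.
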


We end by remarking that the previous theorem is false for Jordan curves. Namely a bi-Lipschitz map between planar Jordan curves $\Gamma_1,\Gamma_2$ need not have a bi-Lipschitz extension to the plane. For example let $\Gamma_1$ be a circle with two outward pointing cusps and let $\Gamma_2$ be a circle with one outward and one inward pointing cusp. It is elementary that $\Gamma_1$ and $ \Gamma_2$ are bi-Lipschitz equivalent, but any such map cannot be extended to a bi-Lipschitz map of the whole plane. This example appears already in \cite[p.388]{Rickman-curves}.

\section*{Acknowledgements}
\label{sec:Acknowledgements}

Saara Lehto and David Freeman helped the authors to understand Steffen Rohde's paper. Jussi \Va\
provided many helpful suggestions and references.


\bibliographystyle{amsalpha} 
\bibliography{mrabbrev,bib}  

\end{document} 